\documentclass[11pt]{article}

\usepackage[T1]{fontenc}
\usepackage{lmodern}
\usepackage{amsmath,amssymb,amsthm}
\usepackage{enumitem}
\usepackage{microtype}
\usepackage[a4paper,margin=1in]{geometry}
\usepackage[hidelinks]{hyperref}

\newtheorem{theorem}{Theorem}[section]
\newtheorem{lemma}[theorem]{Lemma}

\newtheorem{corollary}[theorem]{Corollary}
\theoremstyle{definition}
\newtheorem{definition}[theorem]{Definition}
\newtheorem{remark}[theorem]{Remark}

\newcommand{\topm}{\leq_{\mathrm{top}}}
\newcommand{\topeq}{\equiv_{\mathrm{top}}}
\newcommand{\rootm}{\leq_{\mathrm r}}
\newcommand{\plantm}{\leq_*}
\newcommand{\mle}{\leq_M}
\newcommand{\meq}{\equiv_M}
\newcommand{\qeq}{\equiv_Q}
\newcommand{\rk}{\operatorname{rk}}
\newcommand{\Br}{\operatorname{Br}}
\newcommand{\Hull}{\operatorname{Hull}}
\newcommand{\Aut}{\operatorname{Aut}}

\title{A cardinal trichotomy for topological equivalence classes of countable trees\thanks{This work is supported by the National Natural Science Foundation of China (Nos.~12371348 and 12201258) and the High-Quality Science and Technology Cultivation Project of Jiangsu Normal University (No.~JSNUGZL2026069).}}
\author{Qi Wu, Yong Lu\thanks{Corresponding author.}\\[2pt]
\small School of Mathematics and Statistics, Jiangsu Normal University,\\[-1pt]
\small Xuzhou, Jiangsu 221116, People's Republic of China\\[-1pt]
\small Emails:~\texttt{wuqimath@163.com, luyong@jsnu.edu.cn}}
\date{}

\begin{document}
\maketitle

\begin{abstract}
For a tree $T$, let $[T]$ be the set of isomorphism classes of trees that are mutually topological minors of $T$. Bruno and Szeptycki proved that every locally finite tree satisfies $|[T]|\in\{1,2^{\aleph_0}\}$ and that every tree with a ray containing infinitely many vertices of degree at least $3$ has at least $2^{\aleph_0}$ topological twins. Hence equality holds in the latter case when the tree is countable. We treat the complementary countable case, without any bound on the degrees. For every such tree $T$, we prove that $|[T]|\in\{1,\aleph_0,2^{\aleph_0}\}$ and give a well-founded recursive description based on Schmidt rank. The proof uses a finite canonical subtree extracted from the subtree generated by the branching vertices and a counting theorem for countable multisets over a well-quasi-order. The same trichotomy follows for every countable tree and hence for every tree all of whose vertices have countable degree.
\end{abstract}

\noindent\textbf{Keywords:} countable tree; topological minor; topological twin; Schmidt rank; well-quasi-order.

\medskip
\noindent\textbf{2020 Mathematics Subject Classification:} 05C63; 05C05; 06A07.

\section{Introduction}

Throughout, graphs are simple and undirected, and countable means finite or countably infinite. A path is finite and may consist of one vertex. A tree is a connected acyclic graph. A ray is a one-way infinite path, and a double ray is a two-way infinite path. A graph is \emph{rayless} if it contains no ray. A leaf is a vertex of degree exactly $1$, and a branching vertex is a vertex of degree at least $3$. We call a tree \emph{large} if it has a ray containing infinitely many branching vertices, and \emph{small} otherwise. A graph is \emph{locally countable} if every vertex has countable degree.

A subdivision of a graph is obtained by replacing each edge with a finite path having at least one edge, so that distinct replacement paths are internally disjoint. For trees $T$ and $S$, write $T\topm S$ if a subdivision of $T$ is isomorphic to a subgraph of $S$, and write $T\topeq S$ if both $T\topm S$ and $S\topm T$. For a fixed tree $T$, let $[T]$ be the set of isomorphism classes of all trees $S$ with $S\topeq T$. We call such an $S$ a \emph{topological twin} of $T$. Two infinite trees may be topological twins without being isomorphic, and we ask for the possible cardinalities of $[T]$.

Related questions have been studied for ordinary embeddings, where mutually embeddable trees are usually called \emph{siblings}. Bonato and Tardif \cite{BT06} proved that a rayless tree has either one or infinitely many siblings and proposed the same alternative for all trees. Tyomkyn \cite{Tyomkyn09} proved the rooted version, and Laflamme, Pouzet, and Sauer \cite{LPS17} proved the alternative for scattered trees. The unrestricted conjecture is false: Abdi Kalow, Laflamme, Tateno, and Woodrow \cite{AKLTW23} constructed locally finite trees with any prescribed positive finite sibling number. Abdi \cite{AbdiCB23} introduced a Cantor--Bendixson rank for trees and represented a tree as a leafless core with leafy trees attached. For direct sums of chains, Abdi later proved the trichotomy $1,\aleph_0,2^{\aleph_0}$ in the countable case \cite[Theorem~4.6]{AbdiDSC25}. These works use ordinary graph embeddings for trees or order embeddings for direct sums of chains; neither settles the topological-minor problem considered here.

There are two counting questions. One may count all equivalence classes under $\topeq$, or one may fix $T$ and count the members of $[T]$. The first is global. Matthiesen \cite{Matthiesen06} proved that there are uncountably many topological types of locally finite rooted trees, and Bruno \cite{Bruno17} gave an explicit family of $\omega_1$ types. Bruno and Szeptycki \cite{BStypes22} proved that locally finite trees with countably many rays have exactly $\aleph_1$ topological types. Krill and Pitz \cite{KP24} later proved that, for every infinite cardinal $\kappa$, there are exactly $\kappa^+$ topological types of trees of size $\kappa$. These results do not determine the size of a fixed class $[T]$.

For the local problem, Bruno and Szeptycki \cite[Theorem~2]{BS22} proved that every locally finite tree $T$ satisfies $|[T]|\in\{1,2^{\aleph_0}\}$. They later proved that every tree has either one or at least countably many topological twins \cite[Theorem~1]{BS23}. Their curtailing argument also shows that every large tree has at least $2^{\aleph_0}$ topological twins \cite[Theorem~3.1]{BS23}. Thus every countable large tree has exactly continuum many topological twins. These results leave open the exact cardinality of $[T]$ for countable small trees of arbitrary degree.

Our main result is the following.

\begin{theorem}\label{thm:main}
Every countable small tree $T$ satisfies $|[T]|\in\{1,\aleph_0,2^{\aleph_0}\}$. Moreover, the value is given by a well-founded recursion on Schmidt rank.
\end{theorem}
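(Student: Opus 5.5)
The plan is to reduce a countable small tree $T$ to a rayless-like combinatorial skeleton, and then count twins by transfinite induction on Schmidt rank.

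\textbf{Step 1: the canonical skeleton.} Since $T$ is small, every ray of $T$ eventually contains no branching vertices. Let $B$ be the subtree spanned by the branching vertices; smallness makes $B$ rayless, so $B$ has a Schmidt rank $\rk(B)$. Suppressing degree-$2$ vertices (topological twins do not see subdivisions), $T$ is determined up to $\topeq$ by $B$ together with, at each vertex, a countable multiset of ``ends'': finite pendant paths (which are equivalent to leaves) and pendant rays. Inside $B$ I would extract a finite canonical subtree $K$, namely the finite tree that witnesses the rank, e.g.\ the unique vertex or edge whose removal drops the rank. Then $T$ is $K$ with, at each vertex of $K$, a countable multiset of rooted small trees of strictly smaller rank hanging off it. Up to automorphisms of $K$, two such trees are twins iff the corresponding attached multisets are equivalent in a suitable rooted topological order $\rootm$. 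Checking this canonicity carefully, so that any twin $S$ of $T$ has a skeleton $K_S\cong K$ with matching decomposition, is the first technical step.

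\textbf{Step 2: well-quasi-order and multiset counting.} I would use that rooted countable small trees of bounded rank are well-quasi-ordered under $\rootm$. This follows from a Nash-Williams/Laver-type argument on rayless trees, which I assume is available or provable by induction on rank. The counting theorem is then this: for a wqo $Q$, and a countable multiset $M$ over $Q$, consider the $\meq$-class of $M$, where $\mle$ is the domination order on multisets. I would describe that class by which elements of $Q$ occur finitely versus infinitely often and by which maximal ones are ``absorbed''. The number of countable multisets $\meq$-equivalent to $M$, counted up to identification of $\qeq$-equivalent elements, lies in $\{1,\aleph_0,2^{\aleph_0}\}$. The count is $1$ when nothing can be added or removed. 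It is $\aleph_0$ when some element can be varied only in finite multiplicity $n\in\omega$ while the others are rigid. It is $2^{\aleph_0}$ when infinitely many independent choices are free, for instance when infinitely many pairwise-inequivalent elements dominated by an infinitely repeated element may be freely inserted. The upper bound $2^{\aleph_0}$ is automatic by countability.

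\textbf{Step 3: the recursion.} By induction on rank I would compute $|[T]|$ from the data at $K$. The value at each attachment point combines the multiset count from Step~2 with the class sizes of the attached pieces, which are given by induction. Products and countable sums of values in $\{1,\aleph_0,2^{\aleph_0}\}$ remain in that set, but only if the number of factors exceeding $1$ is finite, or else yields $2^{\aleph_0}$; showing that this dichotomy is correct is part of the recursion. Quotienting by $\Aut(K)$, which is finite, preserves the trichotomy. Base cases are rank $0$, that is, trees with at most one branching vertex (stars of paths and rays), which can be counted directly. The final step, extending from small to all countable trees, uses the Bruno--Szeptycki result on large trees.

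\textbf{Main obstacle.} I expect the crux to be Step~2, where one must rule out intermediate cardinalities such as $\aleph_1$ without assuming CH. The argument has to show that whenever infinitely many free choices exist, they are independent enough to give a perfect set of pairwise non-isomorphic twins. Otherwise, the freedom must be confined to finitely many integer parameters. My first attempt would be to use the wqo to reduce to finitely many $\qeq$-classes of maximal elements, and to analyze the free part via finite antichains in the closure of the downward sets.
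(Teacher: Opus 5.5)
Your architecture matches the paper's: a finite canonical core in the branch hull, local multisets of lower-rank planted pieces over a wqo, a trichotomy for one $\meq$-class of multisets, a finite-to-one quotient by automorphisms of the core, and induction on rank. But Step~1 rests on a false reduction that would wreck the count. Subdivisions are not invisible to $\topeq$. Consider three trees:
\begin{itemize}
\item the star with countably many leaves;
\item the star with countably many pendant paths of two edges;
\item the star with one pendant path of each finite length.
\end{itemize}
Under your reduction these have identical data: one branching vertex and countably many finite pendant paths ``equivalent to leaves''. Yet their diameters are $2$, $4$ and $\infty$, so they are pairwise non-twins. They have $1$, $\aleph_0$ and $2^{\aleph_0}$ twins respectively (Corollary~\ref{cor:finite-hull}). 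At rank~$0$ the whole answer is carried by arm lengths, through the chain $1<2<\cdots<\infty$ of arm types and the invariant $q(v)$. Lengths of paths inside the core are also invariants, because every model between twins restricts to an isomorphism of cores. So nothing may be suppressed: pieces must be kept as exact planted isomorphism types under root-fixing containment. Also, rank~$0$ means a finite branch hull, not at most one branching vertex.

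The canonical skeleton is misidentified too, and its rigidity, not the cardinal arithmetic, is the real technical core. There need not be a single vertex or edge whose removal drops the rank. In the path $u$--$x$--$w$ with infinitely many leaves at $u$ and at $w$, a finite set is rank-reducing if and only if it contains $\{u,w\}$. The paper instead uses the kernel $K(B)$, the unique minimal rank-reducing set, and the core $\Hull_B(K(B)\cup A)$. What you defer as ``checking canonicity'' needs the following chain of results:
\begin{itemize}
\item A model only yields a subdivision of $B$ inside the other branch hull, so you need $K(R')=K(R)$ for a subdivision $R'$ of $R$ (Lemma~\ref{lem:subdivision-kernel}).
\item The kernel-mapping argument, plus a comparison of finite core sizes in both directions, then shows every model between twins is an isomorphism on cores (Corollary~\ref{cor:core-rigidity}).
\item Lemma~\ref{lem:piece-confinement} sends distinct pieces to distinct pieces.
\end{itemize}
The induction must also be stated for anchored trees, with the anchor placed in the core, because the class sizes you feed into Step~3 are counts for planted pieces.

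Your ``main obstacle'' is not one. Once the downward-closed set $I(A)$ and the finite exceptional part $F(A)$ are isolated (Lemma~\ref{lem:multiset-core}), there are only two cases:
\begin{itemize}
\item $I(A)/{\qeq}$ is finite and rigid and every exceptional class is countable, so the class is visibly countable;
\item otherwise $2^{\aleph_0}$ members arise explicitly, either from the families $R_X$, $X\subseteq\mathbb N$, or from a continuum-sized exceptional class.
\end{itemize}
No perfect-set or CH-avoiding argument is needed. What you do need is the inductive fact that every nonrigid class is infinite; that is what lets a single nonrigid class inside $I(A)$ force $2^{\aleph_0}$.
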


Combining this theorem with the large-tree result gives the full countable statement.

\begin{corollary}\label{cor:countable}
Every countable tree $T$ satisfies $|[T]|\in\{1,\aleph_0,2^{\aleph_0}\}$. If $T$ is large, then $|[T]|=2^{\aleph_0}$.
\end{corollary}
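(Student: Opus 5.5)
This corollary is a case split on whether $T$ is small or large, and each case is covered by a result already stated.

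If $T$ is small, Theorem~\ref{thm:main} applies directly and gives $|[T]|\in\{1,\aleph_0,2^{\aleph_0}\}$.

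If $T$ is large, we prove the lower and upper bounds separately.

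\emph{Lower bound.} By the curtailing argument of Bruno and Szeptycki \cite[Theorem~3.1]{BS23}, $T$ has at least $2^{\aleph_0}$ topological twins, so $|[T]|\ge 2^{\aleph_0}$.

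\emph{Upper bound.} We observe that every topological twin $S$ of $T$ is itself countable. Indeed, $S\topm T$ means that a subdivision of $S$ is isomorphic to a subgraph of the countable tree $T$. Hence this subdivision has countably many vertices. Subdividing never deletes vertices, so $S$ has countably many vertices as well.

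Next we count. Each countable tree is isomorphic to a tree on a vertex set contained in $\omega$. Such a tree is determined by its edge set, which is a subset of $[\omega]^2$. There are at most $2^{\aleph_0}$ such subsets, so there are at most $2^{\aleph_0}$ isomorphism classes of countable trees. It follows that $|[T]|\le 2^{\aleph_0}$, and hence $|[T]|=2^{\aleph_0}$.

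Putting the two cases together, every countable tree $T$ satisfies $|[T]|\in\{1,\aleph_0,2^{\aleph_0}\}$, and $|[T]|=2^{\aleph_0}$ whenever $T$ is large.

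No step here is a real obstacle; all the substance lies in Theorem~\ref{thm:main} and in the cited large-tree theorem. The only point needing care is that the class $[T]$ contains only countable trees. This holds because the relation $\topm$ is taken in both directions, so every twin $S$ satisfies $S\topm T$ and inherits countability from $T$.
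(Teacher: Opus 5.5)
Your proof is correct and follows the paper's own argument essentially verbatim. You apply Theorem~\ref{thm:main} in the small case. In the large case you combine the Bruno--Szeptycki lower bound with the observation that every twin is countable and that there are at most $2^{\aleph_0}$ isomorphism types of countable trees.
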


All three values occur. A ray has one topological twin. Joining countably many paths of length $2$ at a common end gives a tree with $\aleph_0$ topological twins. Joining one path of each positive finite length at a common end gives a tree with $2^{\aleph_0}$ topological twins; see Corollary~\ref{cor:finite-hull}.

The proof has three layers. For a small tree, the subtree generated by the branching vertices is rayless, so Schmidt's rank \cite{Schmidt83} applies. We use rank monotonicity under topological minors from \cite[Lemma~2.1]{KP24} and the finite-kernel theory of Bonato, Bruhn, Diestel, and Spr\"ussel \cite[Section~2]{BBDS11}. Their kernel mapping, finite-kernel decomposition, and rank induction for ordinary embeddings \cite[Lemmas~3--5]{BBDS11} are the closest structural precedents for the argument below. In the present setting, subdivisions and anchors require a separate topological adaptation.

The components attached at one vertex of the resulting finite subtree form a countable multiset of planted-tree types. Nash-Williams's theorem gives the underlying better-quasi-ordering of trees by topological minors \cite{NW65}; see also K\"uhn's shorter proof \cite{Kuhn01}. The rooted tree-order preserving form used below is recorded in \cite[Section~4]{KP24}. For the order-theoretic background, Diestel studied the domination order induced on subsets of a well-quasi-order \cite{Diestel01}. Component replacement and increasing-chain constructions occur in the sibling literature, notably in \cite[Lemmas~2.2 and~2.3]{HPW24} and \cite[Sections~3 and~4]{AbdiDSC25}. The exceptional-element estimate used below is the countable case of \cite[Lemma~3.1]{KP24}, whose proof is traced there to \cite[Lemma~3.3]{BEEGHPT22}.

Our new ingredients are an exact fixed-class formula for countable occurrence multisets over an arbitrary well-quasi-order, an anchored topological decomposition of countable small trees, and the resulting fixed-tree recursion. The canonical decomposition reduces the global count to finitely many local multiset counts, and transfinite induction on Schmidt rank completes the proof. At rank $0$, every attached piece is a finite path or a ray, and the recursion becomes a criterion in terms of arm lengths and multiplicities.

Section~2 develops the branch-hull and Schmidt-rank tools, constructs the finite decomposition subtree, and compares the two rooted relations. Section~3 proves the multiset counting theorem. Section~4 establishes the canonical decomposition and the finite-core counting formula. Section~5 proves the recursion and the main results.

\section{Branch hulls, rank, and rooted models}

For vertices $x,y$ of a tree $T$, let $P_T(x,y)$ denote the unique $x$--$y$ path.

\begin{definition}\label{def:topological-model}
A \emph{topological model} of a graph $H$ in a graph $G$ consists of an injection $\phi:V(H)\to V(G)$ and, for each edge $xy\in E(H)$, a $\phi(x)$--$\phi(y)$ path in $G$. Paths assigned to distinct edges meet only at the images of common endvertices, and no internal vertex of an edge-model path belongs to $\phi(V(H))$. We write $H\topm G$ when such a model exists. If $G$ is a tree, the edge-model paths are uniquely determined by $\phi$, and we identify the model with its vertex map.
\end{definition}

\begin{definition}\label{def:anchored}
An \emph{anchored tree} is a pair $(T,A)$, where either $A=\varnothing$, or $A=\{r\}$ and $r$ is a leaf of $T$. In the second case, we write $(T,r)$ and call it a \emph{planted tree}. A topological model of $(T,A)$ in $(S,A')$ is \emph{anchor-preserving} if its vertex map sends $A$ onto $A'$. An anchored isomorphism is defined in the same way. We use $\topm$ and $\topeq$ for anchored trees with this anchor-preserving meaning. An anchored tree is called small or large according to its underlying tree. If $(C,A)$ is finite, then $\Aut(C,A)$ denotes the automorphism group of $C$ that preserves $A$.
\end{definition}

For a tree $T$, put $\Br(T)=\{v\in V(T):\deg_T(v)\geq3\}$. For a nonempty set $X\subseteq V(T)$, let $\Hull_T(X)$ be the smallest subtree of $T$ containing $X$.

\begin{definition}\label{def:branch-hull}
Let $(T,A)$ be an anchored tree. If $\Br(T)\cup A\neq\varnothing$, its \emph{branch hull} is $B(T,A)=\Hull_T(\Br(T)\cup A)$. If $A=\varnothing$ and $\Br(T)=\varnothing$, set $B(T,A)=\varnothing$.
\end{definition}

\begin{lemma}\label{lem:small-hull}
An anchored tree $(T,A)$ is small if and only if $B(T,A)$ is rayless.
\end{lemma}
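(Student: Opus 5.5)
We prove the two implications separately; the only tool needed is the tree fact that a subtree spanned by a set of vertices consists of those vertices together with the paths between them. Recall that $(T,A)$ is small if no ray of $T$ contains infinitely many branching vertices. If $B(T,A)=\varnothing$ there is nothing to prove: then $\Br(T)=\varnothing$, so no ray contains a branching vertex and $T$ is small, while the empty hull is trivially rayless. So we assume $X:=\Br(T)\cup A\neq\varnothing$ and $B:=B(T,A)=\Hull_T(X)$. Since $T$ is a tree, $B$ is the union of the paths $P_T(x,y)$ over $x,y\in X$. In particular, every vertex of $B$ lies on a path between two vertices of $X$.

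\emph{Small implies rayless.} We argue by contraposition. Suppose $R=v_0v_1v_2\dots$ is a ray in $B$; we show that $R$ contains infinitely many branching vertices of $T$. Fix $i\geq1$ such that $v_i\notin A$; all but at most one index qualify, since $|A|\leq1$. Deleting $v_i$ from $B$ leaves the component containing $v_{i+1}$, which includes the whole tail $v_{i+1}v_{i+2}\dots$. We claim this component meets $X$. Otherwise every vertex of $X$ lies in other components of $B-v_i$ or equals $v_i$. Then every path between two vertices of $X$ avoids the edge $v_iv_{i+1}$, so that edge is missing from $\Hull_T(X)$, a contradiction. The same argument applied to the edge $v_{i-1}v_i$ shows that the side of $v_{i-1}$ in $B-v_i$ also meets $X$.

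So $v_i$ lies on a path in $B$ between two vertices of $X$ on opposite sides of it. Now pick an infinite sequence of indices $i_1<i_2<\cdots$. For each $i_k$ we get a vertex $x_k\in X$ in the subtree beyond $v_{i_k}$ in the direction of the tail. We must extract branching vertices on $R$ itself, and this is the main step. Let $w_k$ be the vertex where the path from $v_{i_k}$ to $x_k$ leaves $R$; if it never leaves, then $x_k$ lies on $R$. If $x_k$ lies on $R$ and $x_k\notin A$, then $x_k$ itself is a branching vertex on the tail. If the path leaves $R$ at $w_k$, then $w_k$ has its two neighbours on $R$ together with a neighbour off $R$, so $\deg_T(w_k)\geq3$ and $w_k\in\Br(T)$ lies on $R$.

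Because $A$ is a leaf and $v_i$ has degree $\geq2$ for $i\geq1$, the anchor can occur on $R$ only as $v_0$. So only finitely many $x_k$ are anchors on $R$, and repeating the construction with $i_{k+1}$ larger than the index of the previously found branching vertex gives infinitely many distinct branching vertices on $R$. Hence $T$ is large.

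\emph{Rayless implies small.} Again by contraposition, let $R$ be a ray of $T$ containing infinitely many branching vertices $b_1,b_2,\dots$, in order along $R$. All $b_j$ lie in $X\subseteq V(B)$. Since $B$ is a subtree, it contains every segment $P_T(b_j,b_{j+1})$, and these segments are subpaths of $R$. Their union is the tail of $R$ starting at $b_1$, which is therefore a ray inside $B$. So $B$ is not rayless.

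The expected obstacle is the first direction, specifically guaranteeing that the branching vertices produced lie on the ray itself and are distinct. The off-ray exit argument above resolves this. The remaining care is the anchor $r$, which is a leaf: it can appear at most once, as the initial vertex of a ray, so it cannot spoil the infinitude.
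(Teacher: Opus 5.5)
Your proof is correct and is essentially the paper's argument. The direction ``large implies $B(T,A)$ has a ray'' is identical, and for the converse both rest on the same fact: a vertex on a tail of the ray lies on a path between two generating vertices of the hull, one of which must lie in the forward direction. The differences are cosmetic. The paper discards the anchor's finite path and argues by contradiction with a degree-$2$ tail. You instead exhibit the branching vertices directly, either as the exit vertex of degree at least $3$ or as the generating vertex itself, which cannot be the leaf anchor because it lies at an index at least $1$ on the ray.
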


\begin{proof}
Suppose that $T$ is large, and let $R$ be a ray containing branching vertices $b_0,b_1,\ldots$ in this order. The union of the paths $P_T(b_i,b_{i+1})$ contains a ray in $B(T,A)$.

Conversely, suppose that $B(T,A)$ contains a ray $R$. If $\Br(T)\neq\varnothing$, adding the anchor to $\Hull_T(\Br(T))$ adds at most one finite path. Hence a tail of $R$ lies in $\Hull_T(\Br(T))$. If this tail contained only finitely many branching vertices, then a further tail would contain none. Every vertex of that tail already has two neighbours on the ray, so it has no edge leaving the ray. Fix a vertex $x$ on this tail. Since $x\in\Hull_T(\Br(T))$, it lies on a path between two branching vertices. One of these vertices must lie in the forward component of $T-x$, but that component contains no branching vertex, a contradiction. Thus $R$ contains infinitely many branching vertices. If $\Br(T)=\varnothing$, then $B(T,A)$ is empty or consists only of the anchor, so it contains no ray.
\end{proof}

\begin{lemma}\label{lem:hull-monotone}
Let $\phi:(T,A)\topm(S,A')$ be an anchor-preserving model. If $B(T,A)\neq\varnothing$, then $\phi$ restricts to a model $B(T,A)\topm B(S,A')$.
\end{lemma}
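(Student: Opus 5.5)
The plan is to show directly that $\phi$ maps $\Br(T)\cup A$ into $B(S,A')$, and then to use the fact that hulls in trees are path-closed.

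\emph{Step 1: images of branching vertices.} Let $v\in\Br(T)$. In the model, $v$ has at least three incident edges $vx_1,vx_2,vx_3$. Their model paths $P_S(\phi(v),\phi(x_i))$ meet only at $\phi(v)$, so they start with three distinct edges at $\phi(v)$. Hence $\deg_S\phi(v)\ge 3$, that is, $\phi(v)\in\Br(S)$.

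\emph{Step 2: the anchor.} If $A=\{r\}$, then $\phi(r)=r'$ where $A'=\{r'\}$, because the model is anchor-preserving. Thus $\phi(\Br(T)\cup A)\subseteq \Br(S)\cup A'$.

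\emph{Step 3: paths stay inside the hull.} Since $B(T,A)\ne\varnothing$, the set $\Br(S)\cup A'$ is nonempty and $B(S,A')=\Hull_S(\Br(S)\cup A')$. Every vertex $u$ of $B(T,A)$ lies on a path $P_T(a,b)$ with $a,b\in\Br(T)\cup A$, since the union of such paths is a subtree containing the set, and it is the smallest one. The model sends $P_T(a,b)$ to the concatenation of the model paths of its edges. Model paths of distinct edges are internally disjoint and avoid $\phi(V(T))$ in their interiors, so this concatenation is a path in the tree $S$. It is therefore exactly $P_S(\phi(a),\phi(b))$, whose endpoints lie in $\Br(S)\cup A'$. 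Hence $\phi(u)$ and every model path of an edge of $B(T,A)$ lie in $B(S,A')$.

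\emph{Step 4: conclusion.} Restricting $\phi$ to $V(B(T,A))$ and keeping the same edge paths gives a topological model of $B(T,A)$ in $B(S,A')$. Injectivity and the disjointness conditions are inherited from the original model.

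The only point needing care is the path-concatenation claim in Step 3, namely that the concatenated image really is a path. This is where internal disjointness of the model paths, and the rule that no internal vertex lies in $\phi(V(T))$, are used. No rank or rayless hypothesis is needed.
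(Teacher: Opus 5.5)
Your proof is correct and follows the paper's argument. Branching vertices map to branching vertices, the anchor maps to the anchor, and each path $P_T(a,b)$ with $a,b\in\Br(T)\cup A$ maps onto $P_S(\phi(a),\phi(b))$, so the images and edge-model paths of $B(T,A)$ lie in $B(S,A')$; your Step~3 simply spells out the path-concatenation detail that the paper states in one line.
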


\begin{proof}
If $v\in\Br(T)$, then the model paths of the edges incident with $v$ leave $\phi(v)$ in at least three different directions. Hence $\phi(v)\in\Br(S)$. The anchor maps to the anchor. If $x$ lies on a path between two vertices of $\Br(T)\cup A$, then $\phi(x)$ lies on the path between their images and therefore belongs to $B(S,A')$. Since $B(S,A')$ is a subtree, the edge-model paths of $B(T,A)$ also lie in it.
\end{proof}

\begin{corollary}\label{cor:smallness-downward}
If $(T,A)\topm(S,A')$ and $S$ is small, then $T$ is small.
\end{corollary}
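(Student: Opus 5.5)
The plan is to derive Corollary~\ref{cor:smallness-downward} from Lemmas~\ref{lem:small-hull} and~\ref{lem:hull-monotone}, splitting on whether the branch hull $B(T,A)$ is empty.

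\emph{Case $B(T,A)=\varnothing$.} Then $A=\varnothing$ and $\Br(T)=\varnothing$. The empty graph contains no ray, so Lemma~\ref{lem:small-hull} shows that $T$ is small. This case can also be checked directly: a ray meeting no branching vertices certainly does not contain infinitely many of them.

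\emph{Case $B(T,A)\neq\varnothing$.} Let $\phi$ be an anchor-preserving model of $(T,A)$ in $(S,A')$. By Lemma~\ref{lem:hull-monotone}, $\phi$ restricts to a model $B(T,A)\topm B(S,A')$. Since $S$ is small, Lemma~\ref{lem:small-hull} gives that $B(S,A')$ is rayless.

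The argument then rests on one observation: a topological model of a graph containing a ray yields a ray in the host. Suppose $R=x_0x_1x_2\cdots$ is a ray in $B(T,A)$. Concatenate the edge-model paths $\phi(x_i)\to\phi(x_{i+1})$. Distinct edge-model paths meet only at images of common endvertices, and $\phi$ is injective, so the concatenation is a one-way infinite path in $B(S,A')$. This contradicts raylessness. Hence $B(T,A)$ is rayless, and Lemma~\ref{lem:small-hull} gives that $T$ is small.

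The only step needing care is that the concatenated edge-model paths really form a ray, that is, that no vertex repeats. Within a single edge-model path there is no repetition because it is a path. Two edge-model paths of non-adjacent edges of $R$ are disjoint, since they share no endvertex. Consecutive edge-model paths share only $\phi(x_{i+1})$. Everything else is a direct application of the two cited lemmas.
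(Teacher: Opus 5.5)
Your proof is correct and follows essentially the same route as the paper: both combine Lemma~\ref{lem:small-hull} with Lemma~\ref{lem:hull-monotone} and use the fact that the model of a ray in $B(T,A)$ yields a ray in $B(S,A')$. The paper argues by contradiction, where a large $T$ automatically has a nonempty branch hull. Your explicit empty-hull case and your check that the concatenated edge-model paths form a ray spell out steps the paper leaves implicit.
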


\begin{proof}
If $T$ were large, Lemma~\ref{lem:small-hull} would give a ray in $B(T,A)$. Lemma~\ref{lem:hull-monotone} would then place a subdivision of that ray in $B(S,A')$. Every such subdivision contains a ray, contrary to Lemma~\ref{lem:small-hull}.
\end{proof}

\begin{corollary}\label{cor:smallness-invariant}
If $(T,A)\topeq(S,A')$, then $T$ is small if and only if $S$ is small. Moreover, $B(T,A)=\varnothing$ if and only if $B(S,A')=\varnothing$.
\end{corollary}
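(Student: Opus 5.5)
The first assertion follows by applying Corollary~\ref{cor:smallness-downward} in both directions. Since $(T,A)\topm(S,A')$, smallness of $S$ forces smallness of $T$. Symmetrically, since $(S,A')\topm(T,A)$, smallness of $T$ forces smallness of $S$.

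For the second assertion, it suffices by symmetry to show that $B(T,A)\neq\varnothing$ implies $B(S,A')\neq\varnothing$. I would avoid Lemma~\ref{lem:hull-monotone} here, since it only says what happens once $B(S,A')$ is known to be nonempty. Instead I would use the definition: $B(T,A)\neq\varnothing$ means $\Br(T)\cup A\neq\varnothing$. Fix an anchor-preserving model $\phi:(T,A)\topm(S,A')$.

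If $A\neq\varnothing$, then $\phi$ maps $A$ onto $A'$, so $A'\neq\varnothing$. Hence $B(S,A')\neq\varnothing$.

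If $A=\varnothing$, then $T$ has a vertex $v$ of degree at least $3$. The first step in the proof of Lemma~\ref{lem:hull-monotone} applies without change. The edge-model paths at $v$ meet only at $\phi(v)$, so they leave $\phi(v)$ along at least three distinct edges. Therefore $\phi(v)\in\Br(S)$, and again $B(S,A')\neq\varnothing$.

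No step is a real obstacle. The one point needing care is the one above: the nonemptiness of $B(S,A')$ must be derived directly from the branching-vertex observation, not from Lemma~\ref{lem:hull-monotone}.
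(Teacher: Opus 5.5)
Your proof is correct and is essentially the paper's: the paper applies Corollary~\ref{cor:smallness-downward} in both directions and then Lemma~\ref{lem:hull-monotone} in both directions, and your argument just writes out the anchor-to-anchor and branching-vertex-to-branching-vertex steps from that lemma's proof. The caution in your last paragraph is unnecessary. Lemma~\ref{lem:hull-monotone} assumes only $B(T,A)\neq\varnothing$, and its conclusion gives a model of the nonempty tree $B(T,A)$ inside $B(S,A')$, which already forces $B(S,A')\neq\varnothing$, so you could have cited the lemma directly.
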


\begin{proof}
Apply Corollary~\ref{cor:smallness-downward} in both directions. The second assertion follows from Lemma~\ref{lem:hull-monotone}, again in both directions.
\end{proof}

\begin{definition}\label{def:schmidt-rank}
The \emph{Schmidt rank} of a rayless graph is defined recursively. A finite graph has rank $0$. For an ordinal $\alpha>0$, a graph $G$ has rank $\alpha$ if $\alpha$ is the least ordinal for which some finite set $X\subseteq V(G)$ has the property that every component of $G-X$ has rank below $\alpha$. We write this rank as $\rk(G)$. If $G$ has positive rank, a finite set $X\subseteq V(G)$ is \emph{rank-reducing} if every component of $G-X$ has rank below $\rk(G)$.
\end{definition}

Schmidt \cite{Schmidt83} proved that a graph has a Schmidt rank exactly when it is rayless. A topological minor of a rayless graph is rayless. Moreover, if $G$ is rayless and $H\topm G$, then $\rk(H)\leq\rk(G)$ \cite[Lemma~2.1]{KP24}. Bonato, Bruhn, Diestel, and Spr\"ussel \cite[Section~2]{BBDS11} proved that every infinite rayless graph has a unique inclusion-minimal rank-reducing set.

\begin{definition}\label{def:kernel}
For an infinite rayless graph $G$, its unique inclusion-minimal rank-reducing set is the \emph{kernel} of $G$, denoted by $K(G)$.
\end{definition}

By the uniqueness of the inclusion-minimal rank-reducing set in \cite[Section~2]{BBDS11}, the kernel is contained in every rank-reducing set. We also use the fact proved there that the kernel is nonempty when the graph is connected and has positive rank. Finally, the observation preceding Lemma~3 of \cite{BBDS11} states that if $H\subseteq G$ and $\rk(H)=\rk(G)>0$, then $K(H)\subseteq K(G)$.

The following observation is the finite-extension form of the finite-deletion property of Schmidt rank from \cite[Section~2]{BBDS11}. We include the short verification because the formulation below is the one used for planted pieces.

\begin{lemma}\label{lem:finite-extension}
Let $G$ be a rayless induced subgraph of $H$, and suppose that $V(H)\setminus V(G)$ is finite. Then $H$ is rayless and $\rk(H)=\rk(G)$.
\end{lemma}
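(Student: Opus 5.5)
We first show that $H$ is rayless, and then prove $\rk(H)=\rk(G)$ by checking the two inequalities separately. Put $F=V(H)\setminus V(G)$, which is finite. Since $G$ is an induced subgraph of $H$, we have $G=H-F$.

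\emph{Raylessness.} Any ray $R$ in $H$ meets the finite set $F$ only finitely often. Some tail of $R$ therefore avoids $F$ and lies in $H-F=G$, which contradicts the assumption that $G$ is rayless. So $H$ is rayless and has a Schmidt rank by Schmidt's theorem. The lower bound $\rk(G)\leq\rk(H)$ is immediate: $G\subseteq H$ gives $G\topm H$, and rank monotonicity \cite[Lemma~2.1]{KP24} applies.

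\emph{Upper bound.} We prove $\rk(H)\leq\rk(G)$ by a direct argument.
\begin{enumerate}[label=(\roman*)]
\item If $G$ is finite, then $H$ is finite, and both ranks are $0$.
\item Suppose $G$ is infinite, so that $\alpha=\rk(G)>0$. Choose a finite set $X\subseteq V(G)$ such that every component of $G-X$ has rank below $\alpha$. Put $Y=X\cup F$, which is a finite subset of $V(H)$.
\item Because $G$ is induced in $H$, we have $H-Y=G-X$. So every component of $H-Y$ has rank below $\alpha$.
\item By the definition of rank, $\rk(H)\leq\alpha$.
\end{enumerate}

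The only subtle point is the infinite case with $\rk(G)=0$. An infinite graph cannot have rank $0$, because rank $0$ is reserved for finite graphs. Hence $\rk(G)\geq 1$ whenever $G$ is infinite, and the positivity used in step (ii) holds. In the same way, $H$ is infinite exactly when $G$ is, so the finite and infinite cases match on both sides.

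Combining the two inequalities gives $\rk(H)=\rk(G)$. The rest of the argument is routine.
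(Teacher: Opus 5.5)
Your proposal is correct and matches the paper's proof: a tail argument for raylessness, rank monotonicity for $\rk(G)\leq\rk(H)$, and the upper bound obtained by adding the finite set $F=V(H)\setminus V(G)$ to a rank-reducing set $X$ of $G$, so that $H-(X\cup F)=G-X$ gives $\rk(H)\leq\rk(G)$. Your separate treatment of the finite case and your note that an infinite graph has positive rank are the same case split the paper makes.
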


\begin{proof}
A ray of $H$ would have a tail in $G$, so $H$ is rayless. Rank monotonicity \cite[Lemma~2.1]{KP24} gives $\rk(G)\leq\rk(H)$. If $G$ is finite, then $H$ is finite and both ranks are $0$. Suppose that $\rk(G)=\alpha>0$. Choose a finite rank-reducing set $Y$ in $G$ and put $X=V(H)\setminus V(G)$. Every component of $H-(X\cup Y)=G-Y$ has rank below $\alpha$, so the defining recursion of Schmidt rank gives $\rk(H)\leq\alpha$. Thus $\rk(H)=\rk(G)$.
\end{proof}

The next lemma is the subdivision form of the rank and kernel properties in \cite[Lemma~2.1]{KP24} and \cite[Section~2]{BBDS11}. Its kernel statement is needed because a topological model supplies a subdivision rather than an ordinary copy.

\begin{lemma}\label{lem:subdivision-kernel}
Let $R'$ be a subdivision of a connected rayless tree $R$. Then $\rk(R')=\rk(R)$. If $R$ is infinite, then $K(R')=K(R)$ under the natural identification of the original vertices.
\end{lemma}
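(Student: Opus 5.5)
The plan is transfinite induction on $\rk(R)$, proving both assertions together. Throughout, regard $R\subseteq R'$ via the natural identification: each edge of $R$ becomes a finite path in $R'$, and every vertex of $R'$ not in $R$ is an internal vertex of exactly one such path.

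\textbf{Rank.} Since $R\topm R'$ via the identity map on original vertices, \cite[Lemma~2.1]{KP24} gives $\rk(R)\leq\rk(R')$. Conversely, $R$ is a topological minor of $R'$ and $R'$ is obtained from $R$ by subdividing, so we cannot simply invoke monotonicity backwards; instead show $\rk(R')\leq\rk(R)$ by induction. If $R$ is finite, so is $R'$. If $\rk(R)=\alpha>0$, take a rank-reducing set $X$ of $R$. Each component $D'$ of $R'-X$ is either an internal segment of a subdivided edge between two vertices of $X$ (a finite path, rank $0$), or it contains a component $D$ of $R-X$ together with finitely many subdivision vertices on the (at most finitely many, since $X$ is finite and $R$ is a tree) edges from $D$ to $X$. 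In the latter case $D'$ is a subdivision of $D$ extended by finitely many vertices. By the induction hypothesis the subdivision of $D$ has rank $\rk(D)<\alpha$, and Lemma~\ref{lem:finite-extension} then gives $\rk(D')<\alpha$. Hence $X$ witnesses $\rk(R')\leq\alpha$. Rayless-ness of $R'$ is clear since a ray in $R'$ would yield a ray in $R$.

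\textbf{Kernel.} Assume $R$ is infinite, so $\alpha=\rk(R)=\rk(R')>0$. The argument above shows that $K(R)$ (indeed any rank-reducing set of $R$) is rank-reducing in $R'$, so $K(R')\subseteq K(R)$. For the reverse inclusion, start from $K(R')$. First I would show that $K(R')$ contains no subdivision vertex: if $v\in K(R')$ were internal to a subdivided edge, then removing $v$ from $K(R')$ only merges $v$ into a component of $R'-(K(R')\setminus\{v\})$, enlarging it by finitely many vertices (the segment through $v$ is finite), so by Lemma~\ref{lem:finite-extension} it stays below $\alpha$. This contradicts minimality. So $K(R')\subseteq V(R)$. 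Then each component $D$ of $R-K(R')$ sits inside a component $D'$ of $R'-K(R')$ that is a subdivision of $D$ plus finitely many vertices. Rank monotonicity applied to $D\topm D'$ gives $\rk(D)\leq\rk(D')<\alpha$, so $K(R')$ is rank-reducing in $R$, whence $K(R)\subseteq K(R')$.

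\textbf{Main obstacle.} The delicate point is the merging claim in the step excluding subdivision vertices. Removing $v$ from $K(R')$ can join $v$ with up to two neighbouring components, not just one. The merged component is therefore the union of at most two components of rank $<\alpha$, plus finitely many vertices. I would handle this by noting that such a union is obtained by adding finitely many vertices to a disjoint union of two graphs of rank $<\alpha$. Then a finite separating set of size one (the vertex $v$) already leaves components of rank $<\alpha$, so the defining recursion bounds the rank by the maximum, up to the successor. If the successor issue arises (the recursion only gives rank at most $\max+1$, which could equal $\alpha$), I would fall back on \cite[Section~2]{BBDS11}'s finite-deletion property, as in Lemma~\ref{lem:finite-extension}.
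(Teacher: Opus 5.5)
Your proposal is correct and follows the paper's proof: induction on rank shows that a rank-reducing set of $R$ stays rank-reducing in $R'$, minimality then gives $K(R')\subseteq K(R)$, and $K(R)\subseteq K(R')$ follows because each component of $R-K(R')$ is a topological minor of a component of $R'-K(R')$. The step you flag as the main obstacle, excluding subdivision vertices from $K(R')$, is redundant: the inclusion $K(R')\subseteq K(R)\subseteq V(R)$ you have just proved already gives it, and the paper argues exactly this way. (If you kept that step anyway, it would close via Lemma~\ref{lem:finite-extension}, with no successor issue: deleting $v$ from the merged component leaves a disjoint union of at most two graphs of rank below $\alpha$, and such a union itself has rank below $\alpha$.)
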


\begin{proof}
The tree $R'$ is rayless, since suppressing the subdivision vertices on a ray of $R'$ would produce a ray of $R$. We use induction on $\alpha=\rk(R)$. The assertion is clear when $\alpha=0$. Let $\alpha>0$ and put $K=K(R)$. Every component $D$ of $R-K$ has rank below $\alpha$.

Consider a component $D'$ of $R'-K$. Suppose first that $D'$ contains an original vertex outside $K$. Its original vertices lie in one component $D$ of $R-K$. The subdivision of $D$ contained in $D'$ has rank $\rk(D)$ by induction. Since $R$ is a tree and $K$ is finite, at most $|K|$ edges join $D$ to $K$. Thus $D'$ is obtained from that subdivision by adding only finitely many subdivision vertices on these boundary edges. Lemma~\ref{lem:finite-extension} gives $\rk(D')=\rk(D)<\alpha$. If $D'$ has no original vertex outside $K$, then it is a finite path formed by subdivision vertices of an edge with both ends in $K$, and hence has rank $0$.

Thus $K$ is rank-reducing in $R'$, so $\rk(R')\leq\alpha$. Since $R\topm R'$, rank monotonicity \cite[Lemma~2.1]{KP24} gives the reverse inequality. Hence $\rk(R')=\alpha$.

Now assume that $R$ is infinite. Since $K$ is rank-reducing in $R'$, the kernel containment property from \cite[Section~2]{BBDS11} gives $K(R')\subseteq K$. Put $L=K(R')$. Every component of $R-L$ is a topological minor of a component of $R'-L$. All components of $R'-L$ have rank below $\alpha$, so rank monotonicity shows that $L$ is rank-reducing in $R$. The same property of \cite[Section~2]{BBDS11} gives $K\subseteq L$. Therefore $K(R')=K(R)$.
\end{proof}

The following result is the topological-model analogue of the kernel-mapping lemma for ordinary embeddings in \cite[Lemma~3]{BBDS11}; subdivision invariance is the additional ingredient.

\begin{lemma}\label{lem:kernel-map}
Let $R$ and $S$ be connected rayless trees of the same positive rank, and suppose that $R\topeq S$. If $\phi:R\topm S$, then $\phi(K(R))=K(S)$.
\end{lemma}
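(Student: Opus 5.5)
Write $\alpha$ for the common rank of $R$ and $S$. The plan is to pass from the topological model to an honest subgraph and then use the kernel containment property for subgraphs of equal rank. Let $\phi:R\topm S$, and let $R'\subseteq S$ be the union of the edge-model paths. Then $R'$ is a subdivision of $R$, and we identify each original vertex $x$ of $R$ with $\phi(x)$. Lemma~\ref{lem:subdivision-kernel} gives $\rk(R')=\rk(R)=\alpha$. Moreover, $R$ is infinite because its rank is positive, so the same lemma gives $K(R')=\phi(K(R))$. Since $R'\subseteq S$ and $\rk(R')=\rk(S)=\alpha>0$, the observation from \cite{BBDS11} yields $K(R')\subseteq K(S)$. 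Together these give $\phi(K(R))\subseteq K(S)$.

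For the reverse inclusion I would use a counting argument. Apply the same reasoning to a model $\psi:S\topm R$, which exists because $R\topeq S$. This gives $\psi(K(S))\subseteq K(R)$. Vertex maps of models are injective, so $|K(S)|\le|K(R)|$. Likewise $|K(R)|\le|K(S)|$, using the injectivity of $\phi$. Both kernels are finite, so $|K(R)|=|K(S)|$. Hence the injection $\phi$ carries $K(R)$ into a set of the same finite size, which forces $\phi(K(R))=K(S)$.

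The main obstacle is the identification $K(R')=\phi(K(R))$. It rests on Lemma~\ref{lem:subdivision-kernel}, so I must check that the identification of original vertices there agrees with the vertex map $\phi$. This holds by construction, since $R'$ is built exactly from the model paths. I must also check that $R'$ is connected and rayless. It is connected as a union of paths along a tree, and it is rayless as a subgraph of the rayless tree $S$.
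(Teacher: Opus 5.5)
Your proposal is correct and follows the paper's proof essentially step for step. You pass to the subdivision of $R$ inside $S$ supplied by the model, use Lemma~\ref{lem:subdivision-kernel} together with the equal-rank subgraph containment of kernels to get $\phi(K(R))\subseteq K(S)$, and then upgrade this to equality by comparing the finite kernel sizes through a reverse model $S\topm R$.
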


\begin{proof}
Let $R_\phi\subseteq S$ be the subdivision of $R$ supplied by the model. Lemma~\ref{lem:subdivision-kernel} gives $\rk(R_\phi)=\rk(S)$ and $K(R_\phi)=\phi(K(R))$. Since $R_\phi$ is a same-rank subgraph of $S$, the observation preceding Lemma~3 of \cite{BBDS11} yields $\phi(K(R))\subseteq K(S)$. Applying the same argument to a model of $S$ in $R$ gives $|K(S)|\leq|K(R)|$, while the displayed inclusion gives $|K(R)|\leq|K(S)|$. The kernels are finite, so $\phi(K(R))=K(S)$.
\end{proof}

\begin{definition}\label{def:core}
Let $(T,A)$ be a small anchored tree with nonempty branch hull $B=B(T,A)$. Its \emph{branch rank} is $\rho(T,A)=\rk(B)$. If $\rho(T,A)=0$, set $C(T,A)=B$; if $\rho(T,A)>0$, set $C(T,A)=\Hull_B(K(B)\cup A)$. We call $C(T,A)$ the \emph{canonical core} of $(T,A)$.
\end{definition}

The canonical core is finite. In rank $0$, the branch hull is finite. In positive rank, $K(B)\cup A$ is finite, and its hull is a finite union of finite paths.

\begin{corollary}\label{cor:core-rigidity}
Let $(T,A)$ and $(S,A')$ be small anchored trees with nonempty branch hulls, and suppose that $(T,A)\topeq(S,A')$. Then $\rho(T,A)=\rho(S,A')$. Moreover, every model $\phi:(T,A)\topm(S,A')$ restricts to an isomorphism from $C(T,A)$ onto $C(S,A')$.
\end{corollary}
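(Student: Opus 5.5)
Fix a model $\phi:(T,A)\topm(S,A')$ and a model $\psi:(S,A')\topm(T,A)$, which exist by $(T,A)\topeq(S,A')$. Write $B=B(T,A)$ and $B'=B(S,A')$. By Corollary~\ref{cor:smallness-invariant} both hulls are nonempty, and by Lemma~\ref{lem:small-hull} both are rayless. By Lemma~\ref{lem:hull-monotone}, $\phi$ restricts to a model $B\topm B'$ and $\psi$ to a model $B'\topm B$. Rank monotonicity \cite[Lemma~2.1]{KP24} then gives $\rk(B)\le\rk(B')\le\rk(B)$, so $\rho(T,A)=\rho(S,A')$. Call this common value $\rho$.

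\emph{Case $\rho>0$.} Here $B\topeq B'$ as rayless trees of equal positive rank, so Lemma~\ref{lem:kernel-map} gives $\phi(K(B))=K(B')$. The anchor is preserved, so $\phi(K(B)\cup A)=K(B')\cup A'$. The image of the hull $C(T,A)=\Hull_B(K(B)\cup A)$ under the model is a subdivision of it lying in $\Hull_{B'}(K(B')\cup A')=C(S,A')$. Since the model paths of hull edges stay inside the hull spanned by the images, $\phi$ maps $V(C(T,A))$ injectively into $V(C(S,A'))$. By symmetry $|C(S,A')|\le|C(T,A)|$, and both cores are finite, so $\phi$ is a bijection between the vertex sets. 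A subdivision of $C(T,A)$ that has no vertices beyond the images of original vertices uses only single-edge model paths. Hence every edge of $C(T,A)$ goes to an edge of $C(S,A')$, and counting edges ($|V|-1$ in each tree) shows $\phi$ is an isomorphism.

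\emph{Case $\rho=0$.} The cores are the finite hulls $B$ and $B'$ themselves. Here $\phi$ maps $\Br(T)$ into $\Br(S)$ (as in the proof of Lemma~\ref{lem:hull-monotone}) and $A$ onto $A'$. The image of $B$ is then a subdivision of $B$ inside $B'$, so $|V(B)|\le|V(B')|$ and symmetrically, and the same counting argument shows $\phi$ restricts to an isomorphism $B\to B'$.

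The main obstacle is the counting step. A subdivided copy of a finite tree inside a finite tree of the same order must be the whole tree with no subdivision vertices, but vertex counts alone only see the images of original vertices. I would therefore compare vertex counts of the full image subdivision $\phi(C)\subseteq C'$. We have $|V(\phi(C))|\ge|V(C)|$, with equality iff no edge is subdivided. The symmetric model gives $|V(C')|\le|V(C)|$, so $|V(C)|\le|V(\phi(C))|\le|V(C')|\le|V(C)|$ forces equality throughout. This settles both cases simultaneously.

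One also needs care that $\phi(C(T,A))\subseteq C(S,A')$. This holds because every vertex of $C(T,A)$ lies on a path between two vertices of $K(B)\cup A$ (respectively of $\Br(T)\cup A$), and $\phi$ maps such paths onto paths between images of those vertices.
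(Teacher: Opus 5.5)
Your proposal is correct and follows essentially the same route as the paper. Rank equality comes from Lemma~\ref{lem:hull-monotone} plus rank monotonicity, and kernel preservation from Lemma~\ref{lem:kernel-map}; core inclusion in both directions then gives a bijection of the finite cores, and the target core has no room for subdivision vertices. Your chain $|V(C)|\le|V(\phi(C))|\le|V(C')|\le|V(C)|$ is only a counting reformulation of the paper's remark that any internal vertex of an edge-model path would have to be the image of another core vertex.
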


\begin{proof}
Lemma~\ref{lem:hull-monotone} and rank monotonicity give equality of the branch ranks. Suppose first that the common rank is positive. Lemma~\ref{lem:kernel-map}, applied to the two branch hulls, shows that $\phi$ maps one kernel onto the other. The anchor is also preserved. Hence every source-core vertex maps into the target core. The same inclusion is immediate in rank $0$, because the core is the branch hull. A reverse model gives the opposite inequality between the finite core orders, so $\phi$ maps the source core bijectively onto the target core.

Let $xy$ be an edge of the source core. Its model path lies in the target core, which is a subtree containing $\phi(x)$ and $\phi(y)$. If the path had an internal vertex, that vertex would be the image of another source-core vertex, because the core map is bijective. This is forbidden in a topological model. Thus $\phi(x)\phi(y)$ is an edge. The restriction is therefore an isomorphism.
\end{proof}

\begin{definition}\label{def:rooted-models}
A \emph{rooted tree} is a pair $(T,r)$ consisting of a tree and a distinguished vertex. Write $x\preceq_r y$ when $x$ lies on $P_T(r,y)$. Let $(P,r)$ and $(Q,s)$ be rooted trees, and let $\phi$ be a topological model of the underlying tree $P$ in the underlying tree $Q$. The model is \emph{tree-order preserving} if $x\preceq_r y$ implies $\phi(x)\preceq_s\phi(y)$. We write $(P,r)\rootm(Q,s)$ when such a model exists; no condition that $\phi(r)=s$ is imposed. A model is \emph{root-fixing} if $\phi(r)=s$, and we write $(P,r)\plantm(Q,s)$ when a root-fixing model exists. For planted trees, $\plantm$ is the anchor-preserving relation from Definition~\ref{def:anchored}.
\end{definition}

The rooted relation in \cite[Section~4]{KP24} need not fix the root. For planted trees, the following elementary adjustment supplies the root-fixing relation required by the decomposition.

\begin{lemma}\label{lem:root-fixing}
For planted trees $(P,r)$ and $(Q,s)$, we have $(P,r)\rootm(Q,s)$ if and only if $(P,r)\plantm(Q,s)$.
\end{lemma}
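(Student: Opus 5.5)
The implication from $\plantm$ to $\rootm$ is the easy one. Let $\phi$ be a root-fixing model of $(P,r)$ in $(Q,s)$. Every vertex $y$ of $P$ is joined to $r$ by $P_P(r,y)$, whose edges are modelled by internally disjoint paths that meet only at the images of common endvertices. Concatenating these paths gives a path in $Q$ from $\phi(r)=s$ to $\phi(y)$. As $Q$ is a tree, this is exactly $P_Q(s,\phi(y))$. So if $x\preceq_r y$, then $x$ lies on $P_P(r,y)$, hence $\phi(x)$ lies on $P_Q(s,\phi(y))$, which means $\phi(x)\preceq_s\phi(y)$. Thus $\phi$ is tree-order preserving.

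For the converse, let $\phi$ be a tree-order preserving model of $(P,r)$ in $(Q,s)$ with $\phi(r)=t$, which may differ from $s$. We build a new model $\psi$ that agrees with $\phi$ except that $\psi(r)=s$.

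Since $r$ is a leaf of $P$, it has a unique neighbour $u$, and every vertex of $P$ other than $r$ satisfies $u\preceq_r y$. If $P$ is the single edge $ru$, we simply map $r\mapsto s$ and $u\mapsto$ any vertex other than $s$, using that $Q$ has at least two vertices because $s$ is a leaf. So assume otherwise. Tree-order preservation gives $t\preceq_s\phi(y)$ for every $y$, and therefore all of $\phi(V(P))$ lies in the subtree $Q_t$ of vertices $z$ with $t\preceq_s z$.

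Now set $\psi(r)=s$, $\psi(x)=\phi(x)$ for $x\neq r$, and model the edge $ru$ by $P_Q(s,\phi(u))$. This path is the concatenation of $P_Q(s,t)$ with the old model path of $ru$. All other edge-model paths are unchanged.

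The main point to verify is that $\psi$ is still a topological model, namely:
\begin{itemize}
\item the new path $P_Q(s,\phi(u))$ contains no other image vertex as an internal vertex;
\item it meets the other edge-model paths only at $\phi(u)$.
\end{itemize}
The old part of the path already satisfies both conditions. The added segment $P_Q(s,t)$ consists of vertices $z$ with $z\preceq_s t$. Apart from $t$ itself, such a vertex is not in $Q_t$, while every other edge-model path lies inside $Q_t$. Indeed, each such path joins two images lying in $Q_t$, and $Q_t$ is a subtree, so the path stays in $Q_t$. Hence the added segment meets the rest of the model only at $t$.

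The vertex $t$ is now an internal vertex of the path for $ru$, so we must also check that $t$ lies on no other edge-model path. Suppose $t$ were on the path of an edge $xy$ with $x,y\neq r$. Then $t$ would lie on a path between two vertices of $Q_t$, forcing $t\in\{\phi(x),\phi(y)\}$. This is impossible, since $t=\phi(r)$ and $\phi$ is injective. The same argument shows that no image vertex other than $\phi(u)$ lies on the new path, and injectivity of $\psi$ is immediate. Therefore $\psi$ is a root-fixing model, which gives $(P,r)\plantm(Q,s)$.

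Finally, the case $t=s$ needs no change, and this completes the equivalence.
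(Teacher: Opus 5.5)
Your argument is essentially the paper's: the forward direction concatenates edge-model paths along $P_P(r,y)$, and the converse sends $r$ to $s$ and extends the model path of $ru$ by $P_Q(s,t)$, using that every image lies in $Q_t$ so that $P_Q(s,t)-t$ avoids the rest of the model (your separate single-edge case is harmless but unnecessary). One justification needs repair: lying on a path between two vertices of $Q_t$ does not by itself force $t$ to be an endpoint, since $t$ could be the meeting point of two vertices in different branches above it; the conclusion still holds because $t=\phi(r)$ is an image vertex of the original model $\phi$, hence cannot be internal to any other edge-model path, and $t\notin\{\phi(x),\phi(y)\}$ by injectivity.
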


\begin{proof}
A root-fixing model preserves the tree order: if $x$ lies on the source path from $r$ to $y$, then $\phi(x)$ lies on the target path from $s$ to $\phi(y)$.

Conversely, let $\phi$ be a tree-order preserving model of $P$ in $Q$, put $z=\phi(r)$, and let $u$ be the unique neighbour of $r$. If $z=s$, there is nothing to prove. Assume that $z\neq s$. Since $r\preceq_r y$ for every $y\in V(P)$, the vertex $z$ lies on every path from $s$ to a vertex image. The segment $P_Q(s,z)-z$ is disjoint from the model: no other source vertex maps to it, and an edge-model path whose ends lie below $z$ cannot enter it. Send $r$ to $s$, keep all other vertex images, and replace the model path for $ru$ by $P_Q(s,\phi(u))$. The old image $z$ becomes an internal vertex of the new path, and the added segment is disjoint from every other model path. This gives a root-fixing model.
\end{proof}

The rooted tree-order preserving topological-minor relation is a well-quasi-order; see \cite[Section~4]{KP24}, based on Nash-Williams's theorem \cite{NW65}, and also \cite{Kuhn01}. Combining this result with Lemma~\ref{lem:root-fixing} gives the following corollary.

\begin{corollary}\label{cor:planted-wqo}
The isomorphism types of countable planted trees are well-quasi-ordered by $\plantm$.
\end{corollary}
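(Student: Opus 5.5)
The corollary will follow by transporting the well-quasi-order of rooted trees under $\rootm$ from \cite[Section~4]{KP24} to planted trees through Lemma~\ref{lem:root-fixing}. Concretely, I would view a countable planted tree $(P,r)$ as the rooted tree with root $r$, and take any infinite sequence $(P_0,r_0),(P_1,r_1),\ldots$ of countable planted trees. The rooted tree-order preserving topological-minor relation is a well-quasi-order on countable rooted trees, so there are indices $i<j$ with $(P_i,r_i)\rootm(P_j,r_j)$. Both trees are planted, since each root is a leaf. Lemma~\ref{lem:root-fixing} then converts this into $(P_i,r_i)\plantm(P_j,r_j)$, which is an anchor-preserving model in the sense of Definition~\ref{def:anchored}. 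Hence no infinite sequence is bad for $\plantm$.

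It remains to check that $\plantm$ is a quasi-order on isomorphism types. Well-definedness on types is immediate, because composing a root-fixing model with anchored isomorphisms gives a root-fixing model. Reflexivity is given by the identity model. For transitivity, compose two root-fixing models $\phi:(P,r)\to(Q,s)$ and $\psi:(Q,s)\to(R,t)$. The vertex map $\psi\circ\phi$ is injective and sends $r$ to $t$. Each edge of $P$ goes to the concatenation of the $\psi$-images of the edges along its $\phi$-path. Internal disjointness, and the avoidance of vertex images, are inherited from the two models. The result is again a root-fixing topological model.

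The only point needing care is that the cited well-quasi-order applies to the class of trees we use. The statement in \cite{KP24} and Nash-Williams's theorem cover arbitrary rooted trees, so in particular countable ones, and they give a well-quasi-order rather than only a better-quasi-order. I expect this check to be the main, though minor, obstacle. The planted trees here may have vertices of infinite degree and may contain rays. I would confirm that the cited result imposes neither local finiteness nor raylessness, which holds for Nash-Williams's theorem on all trees under the tree-order preserving topological-minor relation. Beyond this, the deduction is a direct combination of the two ingredients.
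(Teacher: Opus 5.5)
Your proposal matches the paper's argument: the paper also obtains the corollary by citing the well-quasi-ordering of rooted trees under tree-order preserving topological minors \cite[Section~4]{KP24} (via Nash-Williams) and converting $\rootm$ into $\plantm$ with Lemma~\ref{lem:root-fixing}, and your extra check that $\plantm$ is reflexive and transitive is correct. One small slip: a better-quasi-order is the stronger notion, so there is no risk of getting ``only'' a bqo---the wqo property follows automatically.
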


\section{Countable multisets over a well-quasi-order}

\begin{definition}\label{def:wqo-multiset}
A quasi-order $(Q,\leq)$ is a \emph{well-quasi-order}, or \emph{wqo}, if every infinite sequence $q_0,q_1,\ldots$ contains indices $i<j$ with $q_i\leq q_j$. A \emph{countable $Q$-multiset} $A$ consists of a finite or countably infinite occurrence set $\Omega_A$ and a type map $t_A:\Omega_A\to Q$. Elements of $Q$ are called \emph{exact types}. We write $A\mle B$ if there is an injection $f:\Omega_A\to\Omega_B$ such that $t_A(a)\leq t_B(f(a))$ for every $a\in\Omega_A$, and write $A\meq B$ if both $A\mle B$ and $B\mle A$. Two $Q$-multisets are \emph{isomorphic} if a bijection of their occurrence sets preserves exact types.
\end{definition}

\begin{definition}\label{def:multiset-parts}
For $q,q'\in Q$, write $q\qeq q'$ if $q\leq q'$ and $q'\leq q$. The quotient $Q/{\qeq}$, ordered by $[q]_{\qeq}\leq[q']_{\qeq}$ whenever $q\leq q'$, is a partially ordered wqo. For a countable $Q$-multiset $A$, let $I(A)=\{q\in Q:|\{a\in\Omega_A:q\leq t_A(a)\}|=\aleph_0\}$. Let $F(A)$ be the submultiset induced by the occurrences $a$ with $t_A(a)\notin I(A)$; this is the \emph{exceptional part} of $A$. For countable $Q$-multisets $A$ and $E$, let $A+E$ denote their disjoint multiset sum. If $E$ is a submultiset of $A$, then $A-E$ denotes deletion of the occurrences of $E$. A $\qeq$-class is \emph{rigid} if it contains one exact type. A subset of a poset is an \emph{antichain} if no two distinct elements are comparable, and it is \emph{cofinal} if every element of the poset lies below one of its elements.
\end{definition}

The set $I(A)$ is downward closed and is a union of $\qeq$-classes. Indeed, if $p\leq q\in I(A)$, then every occurrence above $q$ is also above $p$, so $p\in I(A)$. If $p\qeq q$, then $p$ and $q$ have the same upper occurrences; hence $p\in I(A)$ if and only if $q\in I(A)$.

We use the standard subsequence form of well-quasi-ordering: every infinite sequence in a wqo has an infinite nondecreasing subsequence. Consequently, every infinite partially ordered wqo contains an infinite strictly increasing sequence, obtained by enumerating infinitely many distinct elements and passing to such a subsequence.

\begin{lemma}\label{lem:downward-universe}
Let $Q_0$ be a downward-closed suborder of $Q$. If $A$ is a countable $Q_0$-multiset and $B\meq A$ as $Q$-multisets, then every type occurring in $B$ belongs to $Q_0$.
\end{lemma}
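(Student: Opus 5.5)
This is a one-step argument: we use only the direction $B\mle A$ and the downward closure of $Q_0$. Since $B\meq A$, in particular $B\mle A$. So we fix an injection $f:\Omega_B\to\Omega_A$ with $t_B(b)\leq t_A(f(b))$ for every occurrence $b\in\Omega_B$.

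Take an arbitrary occurrence $b\in\Omega_B$. Because $A$ is a $Q_0$-multiset, the exact type $t_A(f(b))$ lies in $Q_0$. The type $t_B(b)$ is an element of $Q$ lying below $t_A(f(b))$ in the order of $Q$. Since $Q_0$ is a downward-closed suborder of $Q$, it follows that $t_B(b)\in Q_0$. As $b$ was arbitrary, every type occurring in $B$ belongs to $Q_0$.

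There is no real obstacle here. The only points to check are that the comparison $t_B(b)\leq t_A(f(b))$ is taken in the ambient order of $Q$, and that ``downward closed'' is meant relative to $Q$. Both hold by hypothesis, because $B\meq A$ is asserted as $Q$-multisets. The injectivity of $f$ and the reverse relation $A\mle B$ are not needed.
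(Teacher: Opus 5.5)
Your proposal is correct and is essentially the paper's own proof: take an injection $f$ witnessing $B\mle A$, note that $t_B(b)\leq t_A(f(b))\in Q_0$, and apply downward closure of $Q_0$. Your remark that neither the injectivity of $f$ nor the reverse relation $A\mle B$ is needed is also accurate.
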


\begin{proof}
Let $f$ witness $B\mle A$. For each $b\in\Omega_B$, we have $t_B(b)\leq t_A(f(b))\in Q_0$. Downward closure gives $t_B(b)\in Q_0$.
\end{proof}

The finiteness assertion in the next lemma is the countable case of \cite[Lemma~3.1]{KP24}; Krill and Pitz refer to \cite[Lemma~3.3]{BEEGHPT22} for its proof. We prove the additional exact decomposition because it is needed in the fixed-class count.

\begin{lemma}\label{lem:multiset-core}
For countable $Q$-multisets $A$ and $B$, we have $A\meq B$ if and only if $I(A)=I(B)$ and $F(A)\meq F(B)$. Moreover, $F(A)$ is finite.
\end{lemma}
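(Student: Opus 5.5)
Three things need proof: finiteness of $F(A)$, the forward implication, and the converse.

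\emph{Finiteness of $F(A)$.} Suppose $F(A)$ had infinitely many occurrences $a_0,a_1,\ldots$. By the subsequence form of well-quasi-ordering, the types $t_A(a_i)$ contain an infinite nondecreasing subsequence. Let $q$ be its first term. Then infinitely many occurrences $a$ satisfy $q\le t_A(a)$, and since $\Omega_A$ is countable this number is exactly $\aleph_0$. Hence $q\in I(A)$. But $q$ is the type of an occurrence in $F(A)$, so $q\notin I(A)$, a contradiction.

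\emph{Forward direction.} Assume $A\meq B$, witnessed by injections $f:\Omega_A\to\Omega_B$ and $g:\Omega_B\to\Omega_A$.
\begin{enumerate}[label=(\roman*)]
\item If $q\in I(A)$, then $f$ sends the infinitely many occurrences above $q$ injectively to occurrences of $B$ above $q$. So $q\in I(B)$, and by symmetry $I(A)=I(B)=:I$.
\item For $a\in\Omega_{F(A)}$ we have $t_A(a)\le t_B(f(a))$. Since $I$ is downward closed and $t_A(a)\notin I$, also $t_B(f(a))\notin I$. Thus $f$ maps $F(A)$ into $F(B)$, giving $F(A)\mle F(B)$. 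Symmetrically, $F(B)\mle F(A)$.
\end{enumerate}

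\emph{Converse.} Assume $I(A)=I(B)=I$ and $F(A)\meq F(B)$. It suffices to build an injection $A\mle B$; the other direction is symmetric. The occurrences of $F(A)$ are mapped into $F(B)$ by the given injection. Let $a_0,a_1,\ldots$ enumerate the remaining occurrences, those with type in $I$. These are handled greedily. At stage $n$, the type $t_A(a_n)$ lies in $I(B)$, so infinitely many occurrences of $B$ lie above it. Only finitely many of them have been used so far, namely the finite image of $F(A)$ together with the $n$ previous choices. So there is an unused occurrence above $t_A(a_n)$, and we choose it. This defines an injection $\Omega_A\to\Omega_B$ that is type-increasing on every occurrence, so $A\mle B$.

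The only subtle point in this converse is that $F(B)$ and the non-exceptional part of $B$ share the target. This causes no trouble, because the portion reserved for $F(A)$ is finite, so each stage of the greedy step still has infinitely many available occurrences above $t_A(a_n)$.

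The main obstacle is the finiteness claim, which is where the wqo hypothesis enters; the rest is a routine counting argument. Countability is used only to pin the count in the definition of $I(A)$ to $\aleph_0$.
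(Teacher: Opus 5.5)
Your proof is correct. The forward direction matches the paper's argument, and so does the converse in substance. The only variation in the converse is where the greedy step avoids collisions. You let it avoid the finite image of $F(A)$. The paper instead sends $A-F(A)$ into $B-F(B)$, avoiding all of the finite set $F(B)$. Both work.

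The genuine difference is the finiteness of $F(A)$.
\begin{itemize}
\item The paper recasts $F(A)$ as the set of occurrences that are not $\aleph_0$-embeddable in the wqo $(\Omega_A,\preceq_A)$. It then cites Lemma~3.1 of Krill and Pitz, a general-cardinal statement of which this is the countable case.
\item You prove finiteness directly from the nondecreasing-subsequence form of well-quasi-ordering. An infinite sequence of exceptional occurrences would contain one whose type lies below infinitely many others. That type would then belong to $I(A)$, a contradiction.
\end{itemize}
Your argument is self-contained and elementary, and it is all the countable setting requires. The citation buys nothing extra here.

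One small correction to your closing remark: countability is not used only to pin the count in $I(A)$ to $\aleph_0$. The converse also needs it. The greedy construction requires $\Omega_A$ to be enumerated as an $\omega$-sequence, so that only finitely many targets are used at each stage. Without countability the converse fails. For example, $\aleph_1$ occurrences of a type $q$ cannot be injected into $\aleph_0$ occurrences of $q$, even though both multisets have infinitely many occurrences above $q$.
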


\begin{proof}
Order the occurrence set $\Omega_A$ by $a\preceq_A b$ if $t_A(a)\leq t_A(b)$. This is a wqo, and the occurrences in $F(A)$ are exactly those that are not $\aleph_0$-embeddable in $(\Omega_A,\preceq_A)$. Hence \cite[Lemma~3.1]{KP24} gives that $F(A)$ is finite; see also \cite[Lemma~3.3]{BEEGHPT22}.

If $A\mle B$, then every $q\in I(A)$ lies below infinitely many distinct images in $B$, and therefore belongs to $I(B)$. Thus $A\meq B$ implies $I(A)=I(B)=:I$. Let $f$ witness $A\mle B$. If $a\in F(A)$ and $f(a)\notin F(B)$, then $t_B(f(a))\in I$. Since $I$ is downward closed and $t_A(a)\leq t_B(f(a))$, we obtain $t_A(a)\in I$, a contradiction. Hence $f$ restricts to $F(A)\mle F(B)$. The reverse embedding gives $F(A)\meq F(B)$.

Conversely, assume that $I(A)=I(B)=I$ and $F(A)\meq F(B)$. Put $A_0=A-F(A)$ and $B_0=B-F(B)$. Enumerate $A_0$ as $a_0,a_1,\ldots$, stopping if $A_0$ is finite. For each listed occurrence $a_n$, infinitely many occurrences of $B$ lie above $t_A(a_n)$, and only finitely many belong to $F(B)$. We may therefore choose distinct $b_n\in B_0$ with $t_A(a_n)\leq t_B(b_n)$. Thus $A_0\mle B_0$, and similarly $B_0\mle A_0$. Combining these embeddings with those of the exceptional parts gives $A\meq B$.
\end{proof}

\begin{lemma}\label{lem:finite-rigidity}
If $F$ and $G$ are finite $Q$-multisets, then $F\meq G$ if and only if every $\qeq$-class has the same multiplicity in $F$ and in $G$.
\end{lemma}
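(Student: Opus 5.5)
The plan is to prove the two directions separately. The reverse direction is a direct construction. The forward direction is an induction on $|F|+|G|$ that strips off a maximal class.

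\emph{Reverse direction.} Suppose every $\qeq$-class has the same multiplicity in $F$ and in $G$. For each class $c$, pair the occurrences of $F$ whose type lies in $c$ bijectively with those of $G$ whose type lies in $c$. Paired types are $\qeq$-equivalent, so each pair satisfies both $\leq$ relations. The union of these bijections is therefore an injection witnessing $F\mle G$, and its inverse witnesses $G\mle F$.

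\emph{Forward direction.} Assume $F\meq G$, and let $f:\Omega_F\to\Omega_G$ and $g:\Omega_G\to\Omega_F$ witness the two embeddings. Both occurrence sets are finite and $f,g$ are injective, so $|\Omega_F|=|\Omega_G|$ and both maps are bijections. Then $h=g\circ f$ is a permutation of the finite set $\Omega_F$ with $t_F(a)\leq t_F(h(a))$ for every $a$. Since $h$ has finite order $n$, we get the chain
\[
t_F(a)\leq t_F(h(a))\leq\cdots\leq t_F(h^n(a))=t_F(a),
\]
so equality holds throughout up to $\qeq$. In particular $t_F(a)\qeq t_F(h(a))$.

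Now $t_F(a)\leq t_G(f(a))\leq t_F(g(f(a)))=t_F(h(a))\qeq t_F(a)$. Hence $t_G(f(a))\qeq t_F(a)$ for every $a$. Since $f$ is a bijection that preserves $\qeq$-classes, every class has the same multiplicity in $F$ and in $G$.

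This cycle argument replaces the induction on a maximal class and is short. The only point needing care is that finiteness makes the injections bijective. Beyond that there is no real obstacle; the main thing to state explicitly is that a permutation of a finite set has finite order.
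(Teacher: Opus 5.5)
Your proof is correct and is essentially the paper's argument: finiteness makes the witnessing injections bijections, and following the finite cycle of the permutation $h=g\circ f$ shows that $f$ preserves $\qeq$-classes, so the multiplicities agree. Your opening sentence announcing an induction on $|F|+|G|$ that strips off a maximal class should be removed, since the argument you actually give, like the paper's, is the cycle argument.
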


\begin{proof}
The reverse implication is immediate. Suppose that $F\meq G$, and let $f:F\mle G$ and $g:G\mle F$ witness the two embeddings. Since the occurrence sets are finite, both maps are bijections. The composition $h=g\circ f$ is a permutation of the occurrences of $F$. For every occurrence $a$ of $F$, we have $t_F(a)\leq t_G(f(a))\leq t_F(h(a))$. Following the finite $h$-cycle through $a$ gives $t_F(h(a))\leq t_F(a)$. Hence $t_F(a)\qeq t_G(f(a))$. Thus $f$ preserves every $\qeq$-class and the multiplicities agree.
\end{proof}

Diestel's domination order on subsets of a wqo \cite{Diestel01} is a useful set-valued precursor to the infinite part below, although it does not record multiplicities or occurrence-level injections. The replacement and increasing-chain constructions in the proof adapt ideas from \cite[Lemmas~2.2 and~2.3]{HPW24} and \cite[Lemmas~3.3 and~4.3]{AbdiDSC25}. The next theorem gives the exact cardinality of one fixed mutual-embedding class of occurrence multisets over an arbitrary wqo.

\begin{theorem}\label{thm:multiset-trichotomy}
Assume that $|Q|\leq2^{\aleph_0}$ and that every $\qeq$-class has size $1$, $\aleph_0$, or $2^{\aleph_0}$. For $q\in Q$, let $\theta(q)=|[q]_{\qeq}|$; this value is constant on each $\qeq$-class. Let $A$ be a countable $Q$-multiset and put $P(A)=I(A)/{\qeq}$. Define
\[
 g(A)=
 \begin{cases}
 2^{\aleph_0},&P(A)\text{ is infinite},\\
 2^{\aleph_0},&P(A)\text{ is finite and contains a nonrigid class},\\
 \aleph_0,&P(A)\text{ is finite, rigid, and not an antichain},\\
 1,&P(A)\text{ is a finite rigid antichain},
 \end{cases}
\]
and let $e(A)=\max\bigl(\{1\}\cup\{\theta(t_A(a)):a\in F(A)\}\bigr)$. If $\chi(A)$ is the number of exact multiset isomorphism types $B$ with $B\meq A$, then $\chi(A)=\max\{g(A),e(A)\}$. In particular, $\chi(A)\in\{1,\aleph_0,2^{\aleph_0}\}$.
\end{theorem}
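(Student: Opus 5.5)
By Lemma~\ref{lem:multiset-core}, a multiset $B$ satisfies $B\meq A$ exactly when $I(B)=I(A)=:I$ and $F(B)\meq F(A)$. The plan is therefore to parametrize the isomorphism types of such $B$ by two independent pieces of data and bound the count by their product.

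The first piece is the finite exceptional part $F(B)$. By Lemma~\ref{lem:finite-rigidity}, the condition $F(B)\meq F(A)$ means that $F(B)$ has the same $\qeq$-class multiplicities as $F(A)$. Conversely, any choice of exact representatives inside those classes gives an admissible $F(B)$, provided no chosen type lies in $I$; this holds automatically because $I$ is a union of classes.

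The second piece is the infinite part $B_0=B-F(B)$. It is a countable multiset with all types in $I$, and every $q\in I$ lies below infinitely many occurrences of $B$, hence of $B_0$.

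The number of exceptional parts up to isomorphism is at most the product of finitely many $\theta$-values. It is at least $e(A)$, since varying the representative of one occurrence of maximal $\theta$ gives $e(A)$ pairwise nonisomorphic choices. Since $F(B)$ is recovered from $B$ as a submultiset, distinct exceptional parts give distinct $B$. Hence $\chi(A)\geq e(A)$, and $\chi(A)\leq \max\{e(A),\chi_0\}$ up to a finite-product adjustment that does not change an infinite cardinal. Here $\chi_0$ is the number of types of admissible infinite parts with $I(B_0)=I$.

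For the upper bound, there are at most $|Q|^{\aleph_0}\le 2^{\aleph_0}$ countable multisets in total. The real work is computing $\chi_0$ and showing $\chi_0=g(A)$, which I will do by cases.

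\emph{Case 1: $P(A)$ is a finite rigid antichain.} The maximal elements of $I$ are finitely many classes $\{q_1\},\dots,\{q_k\}$. Any type occurring in $B_0$ lies in $I$, so it lies below some $q_i$; it must equal $q_i$, since $P(A)$ is an antichain and $I$ consists of those classes. Each $q_i$ must occur $\aleph_0$ times. So $B_0$ is unique and $\chi_0=1$.

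\emph{Case 2: $P(A)$ is finite and rigid but not an antichain.} Take classes $p<q$ in $P(A)$. Then $B_0$ must contain $q$ infinitely often, but $p$ may occur $n$ times for any $n\in\{0,1,\dots,\aleph_0\}$. Here $n=0$ is allowed only if some other element of $I$ lies above $p$, which $q$ does. This gives $\aleph_0$ distinct types. The upper bound $\aleph_0$ comes from noting that $B_0$ is determined by a function from the finite set $P(A)$ to $\{0,\dots,\aleph_0\}$, with maximal classes forced to $\aleph_0$.

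\emph{Case 3: $P(A)$ is finite and contains a nonrigid class.} A class $[q]$ has $\theta(q)\ge\aleph_0$. Choose the multiplicities, over the exact types in $[q]$, of infinitely many occurrences of the class (or of occurrences below a maximal class). Subsets of the $\aleph_0$ distinct members give $2^{\aleph_0}$ types. One needs a class that may be populated freely: take a maximal class of $I$ containing a nonrigid element, or populate a nonrigid class below while keeping a maximal class infinite. I will handle this by always including $\aleph_0$ copies of one fixed representative of each maximal class and then adding arbitrary subsets of exact types from the nonrigid class.

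\emph{Case 4: $P(A)$ is infinite.} The preliminary remark gives a strictly increasing sequence $c_0<c_1<\cdots$ in $P(A)$. For each infinite $S\subseteq\mathbb N$, put one occurrence of each $c_i$ with $i\in S$ into a base multiset realizing $I$. The base takes $\aleph_0$ copies of $A_0$ itself, so the $I$-value is preserved. The main obstacle is distinguishing these $2^{\aleph_0}$ choices up to exact isomorphism despite the base. I would instead take the base to be $A_0$ minus nothing and vary multiplicities: for $S\subseteq\mathbb N$, let $B_S$ contain $A_0$ together with $i+1$ extra copies of a representative of $c_i$ for $i\in S$. Isomorphism types are then distinguished when the class $c_i$ occurs finitely often in $A_0$. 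If instead infinitely many $c_i$ occur infinitely often, replace $A_0$ by $\aleph_0$ copies of one representative for each $c_{i}$ with $i\in S$, plus occurrences of all other elements of $I$ not below the chain. Verifying that $I(B_S)=I$ in this construction is the delicate step.

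Combining the cases gives $\chi(A)=\max\{g(A),e(A)\}$.
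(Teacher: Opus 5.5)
\paragraph{Verdict.} Your architecture is the paper's, but Case~4 is not finished. You split $B\meq A$ into the finite exceptional part, counted by $e(A)$ via Lemma~\ref{lem:finite-rigidity}, and an infinite part $C$ with $I(C)=I$ and $F(C)=\varnothing$. Your Cases~1--3 are essentially the paper's arguments, with two small fixes:
\begin{itemize}
\item In Case~2, take $q$ maximal above $p$; otherwise $q$ need not occur infinitely often.
\item In Case~3, exclude the representative you repeat $\aleph_0$ times from the family of exact types you vary, when the nonrigid class is maximal.
\end{itemize}

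\paragraph{Where the gap is.} Case~4 is the one case where the lower bound $2^{\aleph_0}$ needs an idea. Your first subcase is correct: if infinitely many $c_i$ have a representative $z_i$ of finite multiplicity in $A_0=A-F(A)$, then $A_0$ plus extra copies of $z_i$ for $i\in S$ still has $I=I(A_0)$, and the multiplicities of the $z_i$ recover $S$. Your second subcase, ``$\aleph_0$ copies of $z_i$ for $i\in S$ plus occurrences of all other elements of $I$ not below the chain,'' is not a well-defined countable multiset:
\begin{itemize}
\item $I$ may have cardinality $2^{\aleph_0}$, for instance when a single class has size $2^{\aleph_0}$.
\item If the off-chain elements get only finitely many occurrences, a maximal class of $I$ incomparable with the chain drops out of $I(B_S)$.
\end{itemize}
You then defer the verification of $I(B_S)=I$, and that verification is the part that needs proof.

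\paragraph{The missing ingredient.} You need a countable cofinal base. Let $D$ be the set of classes occurring in $A_0$. It is countable, and it is cofinal in $P(A)$: each $q\in I$ lies below infinitely many occurrences of $A$, and $F(A)$ is finite.

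With $D$, your second construction can be repaired, and it then works uniformly, with no case split. For infinite $S\subseteq\mathbb N$, take $\aleph_0$ copies of $z_i$ for $i\in S$, together with $\aleph_0$ copies of a representative of each $d\in D$ that lies below no $c_j$.
\begin{itemize}
\item If $q\in I$ lies below some $c_j$, it is covered because $S$ is infinite.
\item Otherwise $q\le d$ for some $d\in D$. Such a $d$ cannot lie below the chain, since then $q$ would too, so $q$ is covered.
\item All types lie in $I$, so $I(B_S)=I$ and $F(B_S)=\varnothing$.
\item The class $c_i$ occurs $\aleph_0$ or $0$ times according to whether $i\in S$, so distinct $S$ give non-isomorphic multisets.
\end{itemize}

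\paragraph{The paper's route.} The paper writes the chain as $p_n=x_{2n}$, $u_n=x_{2n+1}$. It replaces each $p_n\in D$ by the larger $u_n$, which gives a countable cofinal $D'$ containing no $p_n$. It repeats representatives of $D'$ infinitely often and adds single copies of representatives of $p_n$ for $n\in X$.

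\paragraph{Combination step.} You also need to check that adding a finite multiset $E$, whose classes lie outside $I$, to such a $C$ gives $I(C+E)=I$ and $F(C+E)=E$. This is easy, and it is what gives $\chi(A)\ge\chi_0$ and makes the two counts independent.
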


\begin{proof}
By Lemma~\ref{lem:multiset-core}, every $B\meq A$ has the same set $I=I(A)$, and its exceptional part is mutually embeddable with $F(A)$. Lemma~\ref{lem:finite-rigidity} fixes the multiplicity of each $\qeq$-class in the exceptional part.

We first count the exact exceptional parts. Suppose that a class $E$ occurs $m\geq1$ times in $F(A)$. If $|E|=1$, there is one exact $m$-multiset from $E$. If $|E|$ is $\aleph_0$ or $2^{\aleph_0}$, there are exactly $|E|$ exact $m$-multisets from $E$. Only finitely many classes occur in $F(A)$, so the number of possible exceptional parts is $e(A)$.

It remains to count the multisets $C$ with $I(C)=I$ and $F(C)=\varnothing$. Put $P=I/{\qeq}$.

Suppose first that $P$ is finite and all its classes are rigid. Every maximal class of $P$ must occur $\aleph_0$ times in $C$: infinitely many occurrences lie above it, and maximality forces those occurrences to lie in the same class. Conversely, if every maximal class occurs $\aleph_0$ times and every occurrence has a type in $I$, then $I(C)=I$ and $F(C)=\varnothing$. Each nonmaximal class may occur $0,1,2,\ldots$, or $\aleph_0$ times. Hence there is one possibility when $P$ is an antichain and exactly $\aleph_0$ possibilities otherwise.

Suppose next that $P$ is finite and contains a nonrigid class $E$. Choose distinct exact types $e_*,e_0,e_1,\ldots$ in $E$. For every maximal class $M$ of $P$, choose a representative $r_M$, taking $r_E=e_*$ when $E$ is maximal. Let $R$ contain countably many copies of every $r_M$. Then $I(R)=I$ and $F(R)=\varnothing$. For $X\subseteq\mathbb N$, put $R_X=R+\{e_n:n\in X\}$. These multisets all have the same set $I$ and empty exceptional part, while different sets $X$ give nonisomorphic exact multisets. This is the occurrence-multiset form of the component-replacement construction in \cite[Lemma~2.2]{HPW24} and \cite[Lemma~3.3]{AbdiDSC25}. Thus there are $2^{\aleph_0}$ possibilities.

Finally, suppose that $P$ is infinite. By the standard strict-chain consequence of well-quasi-ordering, choose $x_0<x_1<\cdots$ in $P$, and put $p_n=x_{2n}$ and $u_n=x_{2n+1}$. Let $D$ be the set of classes occurring in $A-F(A)$. It is countable and cofinal in $P$: for every $q\in I$, some occurrence of $A-F(A)$ has type at least $q$, so its class lies above $[q]_{\qeq}$. Set $D'=(D\setminus\{p_n:n\in\mathbb N\})\cup\{u_n:p_n\in D\}$. The set $D'$ is still cofinal, because a removed $p_n$ is replaced by the larger class $u_n$. The strict chain also shows that no added $u_m$ equals any $p_n$, so $D'$ contains no $p_n$. Choose one exact representative from each class in $D'$, and let $R$ contain countably many copies of every chosen representative. Then $I(R)=I$ and $F(R)=\varnothing$. If $z_n$ represents $p_n$, put $R_X=R+\{z_n:n\in X\}$ for $X\subseteq\mathbb N$. Again, all $R_X$ have the same $I$ and empty exceptional part, and different sets $X$ give nonisomorphic multisets. This is the multiset analogue of the increasing-chain constructions in \cite[Lemma~2.3]{HPW24} and \cite[Lemma~4.3]{AbdiDSC25}. Hence there are $2^{\aleph_0}$ possibilities.

Every finite or countably infinite $Q$-multiset can be represented by a sequence in $Q\cup\{*\}$, where $*$ is a padding symbol. Thus the total number of such multisets is at most $(|Q|+1)^{\aleph_0}\leq2^{\aleph_0}$. The lower bounds above are therefore exact.

It remains to combine the two parts. Let $C$ satisfy $I(C)=I$ and $F(C)=\varnothing$, and let $E$ be a finite multiset whose $\qeq$-class multiplicities agree with those of $F(A)$. Every type in $E$ lies outside $I$, because $I$ is a union of $\qeq$-classes and the classes in $E$ are those of $F(A)$. Adding the finite multiset $E$ does not change which types have infinitely many upper occurrences. Hence $I(C+E)=I$ and $F(C+E)=E$. Conversely, every $B\meq A$ has the unique decomposition $(B-F(B))+F(B)$. The choices of the infinite and exceptional parts are therefore independent. Their product is the maximum of the two cardinals, which proves the formula.
\end{proof}

\section{Canonical decomposition}

Throughout this section, anchored trees are countable and small, and their branch hulls are nonempty.

\begin{definition}\label{def:planted-pieces}
Let $\mathcal Q_*$ be the quasi-order of exact isomorphism types of countable planted trees under $\plantm$. Let $(T,A)$ have canonical core $C=C(T,A)$. Every component $U$ of $T-C$ has a unique neighbour in $C$, denoted by $v(U)$. The \emph{planted piece} associated with $U$ is $P_U=(T[U\cup\{v(U)\}],v(U))$. For $v\in C$, let $\mathcal A_T(v)$ be the countable $\mathcal Q_*$-multiset whose occurrences are the pieces $P_U$ with $v(U)=v$ and whose exact types are their planted isomorphism types.
\end{definition}

The next statement is the branch-hull and planted-tree form of the standard kernel fact that every component outside $K(G)$ has smaller rank \cite[Section~2]{BBDS11}. The short proof records the effect of adjoining the planted root.

\begin{lemma}\label{lem:rank-drop}
If $\rho(T,A)=\alpha>0$, then every planted piece $P_U$ has branch rank below $\alpha$.
\end{lemma}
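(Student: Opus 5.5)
The plan is to compare the branch hull of $P_U$ with a piece of $B=B(T,A)$ that avoids the kernel, and then apply the rank tools of Section~2. Write $v=v(U)$ and $B_U=B(P_U,v)$. First I check that the branch rank of $P_U$ is defined. The vertex $v$ has exactly one neighbour in $U$ by Definition~\ref{def:planted-pieces}, so it is a leaf of $P_U$ and $(P_U,v)$ is a planted tree. Since $P_U$ is a subgraph of $T$, Corollary~\ref{cor:smallness-downward} shows that $P_U$ is small. Its branch hull contains the anchor $v$, so it is nonempty.

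Next I locate $B_U$ inside $B$.
\begin{enumerate}[label=(\roman*)]
\item Every vertex of $U$ has all its $T$-neighbours in $U\cup\{v\}$, so its degree in $P_U$ equals its degree in $T$. The anchor $v$ has degree $1$ in $P_U$. Hence $\Br(P_U)\subseteq\Br(T)\cap U\subseteq V(B)$.
\item The anchor $v$ lies in $C(T,A)\subseteq B$.
\item Since $B$ is a subtree containing $\Br(P_U)\cup\{v\}$, it contains $B_U$. Since $B_U$ also lies in $P_U$, we get $V(B_U)\subseteq (V(B)\cap U)\cup\{v\}$.
\end{enumerate}
The set $V(B)\cap U$ is the vertex set of the intersection of two subtrees of $T$. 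It is therefore empty or spans a subtree. In the empty case $B_U=\{v\}$, which is finite and has rank $0<\alpha$.

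In the nonempty case, $B[V(B)\cap U]$ is connected and avoids $C(T,A)$. Since $\alpha>0$, we have $C(T,A)\supseteq K(B)$, so this subtree avoids $K(B)$. It therefore lies inside a single component $D$ of $B-K(B)$, and $\rk(D)<\alpha$ because the kernel is rank-reducing. Now $B_U-v$ is a subgraph of $D$. Rank monotonicity \cite[Lemma~2.1]{KP24} gives $\rk(B_U-v)\leq\rk(D)$; a subgraph of a tree is a topological minor, so the lemma applies. Then Lemma~\ref{lem:finite-extension}, applied to the induced subgraph $B_U-v$ of $B_U$ with one added vertex, gives $\rk(B_U)=\rk(B_U-v)<\alpha$. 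If $B_U-v$ is empty, $B_U$ is the single vertex $v$ and has rank $0$ directly.

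The only step needing care is the containment $V(B_U)\subseteq (V(B)\cap U)\cup\{v\}$. It relies on two facts: degrees of vertices in $U$ are unchanged when passing to $P_U$, and the anchor is a core vertex. Everything else follows from the kernel being rank-reducing and the finite-extension lemma.
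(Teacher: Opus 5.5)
Your proof is correct and follows essentially the same route as the paper: you place the branch hull of $P_U$ inside $(V(B)\cap U)\cup\{v(U)\}$, observe that $B\cap U$ avoids $C(T,A)\supseteq K(B)$ and so lies in a component of $B-K(B)$ of rank below $\alpha$, and finish with rank monotonicity and Lemma~\ref{lem:finite-extension}. The differences are cosmetic. The paper first extends the component $B\cap U$ of $B-C$ by the root and then compares ranks, whereas you delete the root from the hull first. You also check explicitly that $P_U$ is a small planted tree, which the paper leaves implicit.
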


\begin{proof}
Let $B=B(T,A)$ and $C=C(T,A)$. The branching vertices of $P_U$ are exactly $\Br(T)\cap U$. If this set is empty, then the branch hull of $P_U$ consists only of its root and has rank $0$.

Assume that $\Br(T)\cap U\neq\varnothing$. The subtree $B\cap U$ is a component $D$ of $B-C$. Since $K(B)\subseteq C$, the component $D$ lies in a component of $B-K(B)$ and therefore has rank below $\alpha$ by the defining kernel property \cite[Section~2]{BBDS11}. The branch hull of $P_U$ is contained in the tree induced by $V(D)\cup\{v(U)\}$, which is a finite vertex extension of $D$. Lemma~\ref{lem:finite-extension} and rank monotonicity give $\rho(P_U)<\alpha$.
\end{proof}

\begin{lemma}\label{lem:piece-confinement}
Let $\phi:(T,A)\topm(S,A')$ be a model that maps $C(T,A)$ bijectively onto $C(S,A')$. If $U$ is a component of $T-C(T,A)$ attached at $v$, then there is a unique component $W$ of $S-C(S,A')$ attached at $\phi(v)$ such that $\phi$ restricts to a root-fixing model $P_U\plantm P_W$. Distinct components of $T-C(T,A)$ attached at $v$ are sent to distinct components of $S-C(S,A')$.
\end{lemma}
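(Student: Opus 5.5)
Write $C=C(T,A)$ and $C'=C(S,A')$. Let $u$ be the unique neighbour of $v$ in $U$, so that $vu$ is the root edge of $P_U$. The plan is to follow the image of this root edge out of the core and show that everything in $U$ is forced into a single component of $S-C'$. The key structural fact is that $\phi(C)=C'$ as a set. Hence no vertex of $T$ outside $C$ maps into $C'$, since $\phi$ is injective and the core vertices already exhaust $C'$. Likewise, no edge-model path of an edge with an endpoint outside $C$ uses a vertex of $C'$ as an internal vertex, because internal vertices of model paths avoid $\phi(V(T))\supseteq C'$.

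First we locate the image of $u$. Since $u\notin C$, we have $\phi(u)\notin C'$, so $\phi(u)$ lies in some component $W$ of $S-C'$. The model path of $vu$ runs from $\phi(v)\in C'$ to $\phi(u)$, and none of its internal vertices lie in $C'$. Its vertex after $\phi(v)$ is therefore outside $C'$. The whole path minus $\phi(v)$ is a connected set avoiding $C'$, so it lies in $W$. Since that path leaves $C'$ from $\phi(v)$, we get $v(W)=\phi(v)$.

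Next we confine the rest of $U$. For any $x\in U$, the subtree $U$ is connected, so $P_T(u,x)\subseteq U$. The union of the model paths of its edges is a walk in $S$ from $\phi(u)$ to $\phi(x)$. All its vertices are images of vertices of $U$ or internal path vertices, and neither kind lies in $C'$. Hence $\phi(x)\in W$, and every model path of an edge inside $U$ lies in $W$. Thus $\phi$, restricted to $V(U)\cup\{v\}$, takes values in $V(W)\cup\{\phi(v)\}$ and sends the root $v$ to the root $\phi(v)$. This is a root-fixing model $P_U\plantm P_W$. Uniqueness of $W$ is immediate, because $\phi(u)$ lies in exactly one component.

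Finally we check that distinct components go to distinct components. Let $U_1\ne U_2$ be attached at $v$, with root neighbours $u_1$ and $u_2$. Suppose both were sent into the same $W$. Then both edge-model paths from $\phi(v)$ would enter $W$. But $W$ is a component of $S-C'$ whose only neighbour in $C'$ is $\phi(v)$, and $S$ is a tree, so there is exactly one edge from $\phi(v)$ into $W$. The two model paths would then share their second vertex, which is not $\phi(v)$ (this uses $\phi(u_i)\ne\phi(v)$, so each path has at least one edge). Paths of distinct edges may meet only at images of common endvertices, so they may share only $\phi(v)$, a contradiction.

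The main obstacle is being careful that $\phi(C)=C'$ exactly, not just $\phi(C)\subseteq C'$. That equality is what rules out internal vertices of model paths, or images of outside vertices, landing in $C'$. It is supplied directly by the hypothesis that $\phi$ maps $C(T,A)$ bijectively onto $C(S,A')$, together with injectivity and the definition of a topological model.
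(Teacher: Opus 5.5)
Your proof is correct and follows the paper's argument. The bijection $\phi(C(T,A))=C(S,A')$ keeps the images of $U$ and all internal model-path vertices out of $C(S,A')$, connectivity then places the model of $P_U$ in a single component $W$ attached at $\phi(v)$, and the unique edge from $\phi(v)$ into $W$ gives distinctness. Your explicit tracing of the root edge and of walks from $\phi(u)$ just spells out the paper's remark that the model of $P_U$ stays connected after deleting $\phi(v)$.
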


\begin{proof}
Write $C_T=C(T,A)$ and $C_S=C(S,A')$. No vertex of $U$ maps into $C_S$, because every vertex of $C_S$ is already the image of a vertex of $C_T$. No edge-model path of $P_U$ can contain another vertex of $C_S$ as an internal vertex, for the same reason. Hence the model of $P_U$ meets $C_S$ only at $\phi(v)$.

After deleting $\phi(v)$, the model of $P_U$ remains connected, so it lies in one component $W$ of $S-C_S$. The unique neighbour of $W$ in $C_S$ is $\phi(v)$, and the restricted model fixes the root. If two different source components attached at $v$ entered the same $W$, the model paths of their first edges would both use the unique edge from $\phi(v)$ into $W$. This contradicts the disjointness condition for a topological model.
\end{proof}

The next theorem is the anchored topological-minor analogue of the finite-kernel decomposition for ordinary embeddings in \cite[Lemma~4]{BBDS11}; compare also the leafless-core representation in \cite{AbdiCB23}. Lemma~\ref{lem:piece-confinement} supplies the additional separation needed for subdivisions and root-fixing pieces.

\begin{theorem}\label{thm:decomposition}
Let $(T,A)$ and $(S,A')$ be countable small anchored trees with nonempty branch hulls. Then $(T,A)\topeq(S,A')$ if and only if there is an anchor-preserving isomorphism $\sigma:C(T,A)\to C(S,A')$ such that $\mathcal A_T(v)\meq\mathcal A_S(\sigma(v))$ for every $v\in C(T,A)$. Moreover, $(T,A)\cong(S,A')$ if and only if $\sigma$ can be chosen so that all corresponding planted multisets are exactly isomorphic.
\end{theorem}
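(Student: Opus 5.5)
For the forward direction, I will fix models $\phi:(T,A)\topm(S,A')$ and $\psi:(S,A')\topm(T,A)$. By Corollary~\ref{cor:core-rigidity}, $\phi$ restricts to an anchor-preserving isomorphism $\sigma:C(T,A)\to C(S,A')$, and $\psi$ restricts to an isomorphism $\tau$ in the other direction. For each $v\in C(T,A)$, Lemma~\ref{lem:piece-confinement} then sends the components of $T-C(T,A)$ attached at $v$ injectively to components of $S-C(S,A')$ attached at $\sigma(v)$, each with a root-fixing model $P_U\plantm P_W$. This injection witnesses $\mathcal A_T(v)\mle\mathcal A_S(\sigma(v))$. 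Symmetrically, $\psi$ gives $\mathcal A_S(w)\mle\mathcal A_T(\tau(w))$. The difficulty is that $\tau$ need not equal $\sigma^{-1}$. To handle this, I consider $\pi=\tau\circ\sigma$, an anchor-preserving automorphism of the finite core $C(T,A)$, and choose $k\geq1$ with $\pi^k=\mathrm{id}$. The composite inequalities give $\mathcal A_T(v)\mle\mathcal A_S(\sigma v)\mle\mathcal A_T(\pi v)\mle\cdots\mle\mathcal A_T(\pi^k v)=\mathcal A_T(v)$, and transitivity of $\mle$ (composition of injections) yields $\mathcal A_S(\sigma v)\mle\mathcal A_T(v)$. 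Hence $\mathcal A_T(v)\meq\mathcal A_S(\sigma(v))$ for every $v$, with this single $\sigma$.

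For the converse, given $\sigma$ and the equivalences, I will build a model of $(T,A)$ in $(S,A')$ and obtain the reverse model symmetrically via $\sigma^{-1}$. On the core the model is $\sigma$, and core edges go to core edges. For each $v$, I fix an injection $f_v$ witnessing $\mathcal A_T(v)\mle\mathcal A_S(\sigma v)$, together with a root-fixing model $P_U\plantm P_{f_v(U)}$ for each piece $U$. I then glue these. The target pieces at $\sigma(v)$ are distinct components of $S-C(S,A')$ meeting the core only at $\sigma(v)$, so the piece models are pairwise disjoint apart from the shared root $\sigma(v)$. Each piece model's first edge enters its own component, and no piece model contains a core vertex other than its root. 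This gives a valid topological model. Anchor preservation holds because $\sigma$ preserves anchors and the anchor, being a leaf in $A$, lies in the core (when $A\neq\varnothing$, $A\subseteq B$ and hence $A\subseteq C$ by the definition of the core). I should also confirm that every vertex of $T$ lies either in the core or in a unique piece, which holds because the components of $T-C$ each have a unique neighbour in $C$.

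For the isomorphism statement, one direction is immediate. An anchored isomorphism is in particular a model that maps cores onto cores (Corollary~\ref{cor:core-rigidity}), and by Lemma~\ref{lem:piece-confinement} it bijects the components at $v$ with those at $\sigma(v)$ through isomorphic planted pieces, since an isomorphism restricted to a piece is a planted isomorphism onto its image piece. Conversely, exact multiset isomorphisms give bijections of pieces preserving planted types. Gluing chosen planted isomorphisms to $\sigma$ as above then produces a bijection $V(T)\to V(S)$ that preserves edges in both directions, since every edge lies in the core or in exactly one piece.

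I expect the main obstacle to be the forward direction's mismatch between $\sigma$ and $\tau^{-1}$, which the finite-order automorphism trick above resolves. A second point to check is that Lemma~\ref{lem:piece-confinement} really yields root-fixing models of planted pieces, so that the relevant order is $\plantm$, matching the order on $\mathcal Q_*$.
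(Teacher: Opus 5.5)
Your proposal is correct and follows essentially the same route as the paper: core rigidity plus piece confinement give the local embeddings; the automorphism $\pi=\tau\sigma$ of the finite core turns the mismatched reverse embeddings into $\mathcal A_S(\sigma(v))\mle\mathcal A_T(v)$; and gluing root-fixing piece models (or planted isomorphisms) to $\sigma$ handles the converse and the isomorphism clause. Using the order $k$ of $\pi$ instead of the orbit length, and your extra checks that the anchor lies in the core and that the glued piece models meet only at their roots, are minor refinements of the paper's argument.
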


\begin{proof}
We follow the finite-kernel matching scheme of \cite[Lemma~4]{BBDS11}, using Corollary~\ref{cor:core-rigidity} and Lemma~\ref{lem:piece-confinement} for the topological adaptation. Suppose first that $(T,A)\topeq(S,A')$, and choose models $\phi:(T,A)\topm(S,A')$ and $\psi:(S,A')\topm(T,A)$. Their restrictions to the canonical cores are isomorphisms. Put $\sigma=\phi|_{C(T,A)}$ and $\tau=\psi|_{C(S,A')}$. Lemma~\ref{lem:piece-confinement} gives $\mathcal A_T(v)\mle\mathcal A_S(\sigma(v))$ for every core vertex $v$, and the reverse model gives $\mathcal A_S(w)\mle\mathcal A_T(\tau(w))$ for every target-core vertex $w$.

Let $\pi=\tau\sigma\in\Aut(C(T,A))$. Fix $v$ and let $m$ be the length of its finite $\pi$-orbit. Alternating the two local embeddings along this orbit gives
\[
 \mathcal A_S(\sigma(v))\mle\mathcal A_T(\pi(v))\mle\mathcal A_S(\sigma(\pi(v)))\mle\cdots\mle\mathcal A_T(\pi^m(v))=\mathcal A_T(v).
\]
Together with $\mathcal A_T(v)\mle\mathcal A_S(\sigma(v))$, this proves mutual embeddability at $v$.

Conversely, suppose that $\sigma$ and the local mutual embeddings are given. At each core vertex, use a multiset injection to match the source pieces with distinct target pieces, and choose a root-fixing model for each matched pair. The target pieces are disjoint outside their roots, so these models glue to the core isomorphism and give $(T,A)\topm(S,A')$. The reverse local embeddings give a model in the other direction.

A global isomorphism maps the canonical core onto the canonical core and induces exact multiset isomorphisms at each core vertex. Conversely, exact local multiset isomorphisms glue to the core isomorphism and give a global isomorphism.
\end{proof}

\begin{definition}\label{def:counting-parameters}
For a countable small anchored tree $(T,A)$, let $\Theta(T,A)$ be the number of anchored isomorphism types in its anchored topological-equivalence class. In particular, $\Theta(T,\varnothing)=|[T]|$. For a planted tree $P=(T,r)$, write $\Theta(P)=\Theta(T,\{r\})$. If $B(T,A)\neq\varnothing$ and $v\in C(T,A)$, let $\mathcal X_T(v)$ be the set of isomorphism types of countable $\mathcal Q_*$-multisets $M$ satisfying $M\meq\mathcal A_T(v)$, and put $\chi_T(v)=|\mathcal X_T(v)|$.
\end{definition}

\begin{corollary}\label{cor:finite-core-count}
Let $(T,A)$ be a countable small anchored tree with nonempty branch hull. If $\chi_T(v)\in\{1,\aleph_0,2^{\aleph_0}\}$ for every $v\in C(T,A)$, then $\Theta(T,A)=\max_{v\in C(T,A)}\chi_T(v)$.
\end{corollary}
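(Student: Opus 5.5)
We will prove $\Theta(T,A)=\max_v\chi_T(v)$ by exhibiting a surjection from a finite product of the local sets $\mathcal X_T(v)$ onto the set of anchored isomorphism types in the class of $(T,A)$, with finite fibres. Write $C=C(T,A)$.

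\emph{The surjection.} For each tuple $(M_v)_{v\in C}$ with $M_v\in\mathcal X_T(v)$, build a tree $T_M$. Take a copy of $C$ and, at each vertex $v$, attach disjoint planted trees realising the occurrences of $M_v$, identifying each root with $v$. Since $M_v$ is a countable multiset of countable planted trees, $T_M$ is countable. We must then check that $T_M$ is small and that it has the right canonical core. For both checks the plan is to show $(T_M,A)\topeq(T,A)$ directly, by the gluing argument from the converse part of Theorem~\ref{thm:decomposition}. That argument uses only the core isomorphism and the local injections, which we have because $M_v\meq\mathcal A_T(v)$. With this equivalence in hand:
\begin{itemize}[nosep]
\item Corollary~\ref{cor:smallness-invariant} gives that $T_M$ is small with nonempty branch hull.
\item Corollary~\ref{cor:core-rigidity} gives that every model from $(T,A)$ into $(T_M,A)$ maps $C$ isomorphically onto $C(T_M,A)$.
\end{itemize}
The remaining point is that this image is exactly the copy of $C$ inside $T_M$, so that the pieces of $T_M$ at $v$ are precisely the occurrences of $M_v$. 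For this we choose the gluing model to be the identity on the core copy. Then the class of $(T,A)$ contains the type of $(T_M,A)$.

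Conversely, take any $(S,A')\topeq(T,A)$. Theorem~\ref{thm:decomposition} gives an anchor-preserving isomorphism $\sigma:C\to C(S,A')$ with $\mathcal A_S(\sigma(v))\meq\mathcal A_T(v)$. Hence $(S,A')\cong T_M$ for $M_v=\mathcal A_S(\sigma(v))$, by the exact-isomorphism part of the theorem. So the map $M\mapsto[T_M]$ is onto the class.

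\emph{Counting.} Onto-ness gives the upper bound
\[
\Theta(T,A)\le\prod_{v\in C}\chi_T(v).
\]
Since $C$ is finite and each factor lies in $\{1,\aleph_0,2^{\aleph_0}\}$, this product equals $\max_v\chi_T(v)$, or $1$ if all factors are $1$.

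For the lower bound we bound the fibres. If $T_M\cong T_{M'}$ as anchored trees, the exact-isomorphism part of Theorem~\ref{thm:decomposition} gives some $\sigma\in\Aut(C,A)$ with $M'_{\sigma(v)}\cong M_v$ for all $v$. Here we use that the canonical core of $T_M$ is the core copy, as shown above. Thus each fibre has size at most $|\Aut(C,A)|$, which is finite. A map with finite fibres onto a set of size $\Theta$ has a domain of size at most $\Theta\cdot|\Aut(C,A)|$. Hence $\Theta(T,A)\ge\max_v\chi_T(v)$ when that maximum is infinite.

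When the maximum is $1$, the class is nonempty, since it contains $T$. Also $\Theta\le1$ from the product bound, so $\Theta(T,A)=1$.

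\emph{Main obstacle.} The delicate step is the identification of $C(T_M,A)$ with the core copy. This is where the rigidity of Corollary~\ref{cor:core-rigidity} is needed, together with the fact that the gluing model can be taken to be the identity on the core. Once that is secured, the rest is cardinal arithmetic.
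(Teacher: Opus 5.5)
Your proposal is correct and follows essentially the same route as the paper: you realise each tuple in $\prod_{v}\mathcal X_T(v)$ by gluing pieces to a fixed copy of $C$, get surjectivity from Theorem~\ref{thm:decomposition}, and bound the fibres by $|\Aut(C,A)|$. Your identification of $C(T_M,A)$ through the glued model $(T,A)\topm(T_M,A)$ that is the identity on $C$ is slightly more direct than the paper's unique-preimage argument with the reverse model. The one check you skip, which the paper does explicitly, is that the anchor remains a leaf of $T_M$, so that $(T_M,A)$ is an anchored tree at all; this follows because $M_r\meq\mathcal A_T(r)$ forces $M_r$ to be empty when $|C|>1$ and to have exactly one occurrence when $C=\{r\}$.
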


\begin{proof}
Write $C=C(T,A)$ and $\mathcal X=\prod_{v\in C}\mathcal X_T(v)$. Since $C$ is finite, $|\mathcal X|=1$ when all factors are singletons; otherwise $|\mathcal X|$ is the largest factor.

Fix $x=(x_v)_{v\in C}\in\mathcal X$. Choose a representative multiset for every $x_v$, choose a countable planted tree representing each occurrence type, and attach these trees to the fixed core $C$. The resulting graph $T_x$ is a tree. Its anchored isomorphism type is independent of these choices: an exact multiset isomorphism pairs occurrences of the same planted isomorphism type, and the corresponding planted isomorphisms glue to the identity on $C$. Thus $x\mapsto[T_x,A]$ is well defined. The core is finite, every local occurrence set is countable, and every attached tree is countable; hence $T_x$ is countable.

The anchor remains a leaf. Indeed, if $A=\{r\}$ and $|C|>1$, then the unique neighbour of $r$ lies in $C$, so $\mathcal A_T(r)$ and hence $x_r$ are empty. If $C=\{r\}$, then $\mathcal A_T(r)$ has one occurrence, and mutual injections force $x_r$ to have one occurrence.

Gluing the identity on $C$ to the local embeddings in both directions gives $(T_x,A)\topeq(T,A)$. By Corollary~\ref{cor:smallness-invariant}, the tree $T_x$ is small and has nonempty branch hull. Apply Corollary~\ref{cor:core-rigidity} to the model $(T_x,A)\topm(T,A)$ that fixes the displayed copy of $C$. Its canonical core maps bijectively onto $C$. Every displayed vertex $c\in C$ is fixed by the model and is the unique preimage of $c$, so it belongs to $C(T_x,A)$. The two cores have the same finite order; therefore the displayed copy of $C$ is exactly $C(T_x,A)$.

Now let $(S,A')\topeq(T,A)$. By Theorem~\ref{thm:decomposition}, choose a core isomorphism from $C$ onto $C(S,A')$ and pull the exact local multisets of $S$ back to $C$. This gives a tuple $x\in\mathcal X$ for which $(T_x,A)\cong(S,A')$. Hence the construction maps $\mathcal X$ onto the anchored isomorphism types in the topological-equivalence class of $(T,A)$.

If $(T_x,A)\cong(T_y,A)$, then the isomorphism restricts to an element $\gamma\in\Aut(C,A)$ and sends $x_v$ to $y_{\gamma(v)}$ for every $v\in C$. For fixed $x$ and $\gamma$, the tuple $y$ is determined. Thus every anchored isomorphism type has at most $|\Aut(C,A)|$ preimages. The construction is finite-to-one. Its image has cardinality $|\mathcal X|$ when $\mathcal X$ is infinite, and cardinality $1$ when $|\mathcal X|=1$. This proves the formula.
\end{proof}

\section{Counting countable small trees}

We first treat branch rank $0$.

\begin{definition}\label{def:arms}
Let $(T,A)$ have finite nonempty branch hull. Then $C(T,A)=B(T,A)$. Every component of $T-C(T,A)$ is a finite path or a ray, attached to the core at an endvertex. Its planted piece is a \emph{path arm of length $n$} if it has $n$ vertices outside the root, and a \emph{ray arm} if it is infinite. For $v\in C(T,A)$, let $a_v(n)$ be the number of path arms of length $n$ and let $a_v(\infty)$ be the number of ray arms. For $k\geq1$, put $M_v(k)=a_v(\infty)+\sum_{n\geq k}a_v(n)$. Let $q(v)=\sup\{k\geq1:M_v(k)=\aleph_0\}$, with $q(v)=0$ if the set is empty and $q(v)=\infty$ if it is unbounded. Finally, let $Q_{\mathrm{arm}}$ be the set of exact planted isomorphism types of path arms and ray arms, ordered by $\plantm$.
\end{definition}

\begin{corollary}\label{cor:finite-hull}
Let $(T,A)$ be a countable small anchored tree with finite nonempty branch hull. Then
\[
 \Theta(T,A)=
 \begin{cases}
 1,&q(v)\leq1\text{ for every }v\in C(T,A),\\
 \aleph_0,&q(v)<\infty\text{ for every }v\text{ and }q(v)\geq2\text{ for some }v,\\
 2^{\aleph_0},&q(v)=\infty\text{ for some }v.
 \end{cases}
\]
\end{corollary}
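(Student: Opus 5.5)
The plan is to reduce everything to Corollary~\ref{cor:finite-core-count} and Theorem~\ref{thm:multiset-trichotomy}, applied with $Q=Q_{\mathrm{arm}}$. First I identify the order on arm types. Write $\pi_n$ for the path arm of length $n$ and $\pi_\infty$ for the ray arm. Then $\pi_m\plantm\pi_n$ holds exactly when $m\leq n$, and $\pi_n\plantm\pi_\infty$ for every $n$. A root-fixing model of a path in a path is forced, and the ray contains no leaf to receive the endpoint of a path model; nothing else is needed here. So $Q_{\mathrm{arm}}$ is the chain $1<2<\cdots<\infty$. It is a wqo, every $\qeq$-class is a singleton (rigid, $\theta\equiv1$), and $|Q_{\mathrm{arm}}|=\aleph_0$.

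Next I check that the multiset universe is right. Each $\mathcal A_T(v)$ is a $Q_{\mathrm{arm}}$-multiset. The downward closure of $Q_{\mathrm{arm}}$ inside $\mathcal Q_*$ is just $Q_{\mathrm{arm}}$ itself, because a planted tree that is a topological minor of a path or ray arm has no branching vertex, so it is again a path or ray arm. Lemma~\ref{lem:downward-universe} then shows that every $M\meq\mathcal A_T(v)$ in $\mathcal Q_*$ is a $Q_{\mathrm{arm}}$-multiset. Hence $\chi_T(v)$ equals $\chi(\mathcal A_T(v))$ computed over $Q_{\mathrm{arm}}$, and Theorem~\ref{thm:multiset-trichotomy} applies.

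Then I compute $I(\mathcal A_T(v))$. For finite $k$, the number of occurrences above $\pi_k$ is $M_v(k)$, so $\pi_k\in I$ if and only if $M_v(k)=\aleph_0$. The set $I$ is downward closed, so its finite part is $\{\pi_1,\dots,\pi_{q(v)}\}$ when $q(v)<\infty$. Also $\pi_\infty\in I$ if and only if $a_v(\infty)=\aleph_0$. In that case $M_v(k)=\aleph_0$ for all $k$, so $q(v)=\infty$.

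Now I read off $g$, with $e\equiv1$ since all classes are rigid. If $q(v)=\infty$, then $P=I/{\qeq}$ is infinite and $g=2^{\aleph_0}$. If $2\leq q(v)<\infty$, then $P$ is a finite chain with at least two elements, so it is rigid and not an antichain, and $g=\aleph_0$. If $q(v)\leq1$, then $P$ is empty or a singleton, which is a finite rigid antichain, so $g=1$. An empty $P$ counts as an antichain here: the exceptional part is then the whole finite multiset and the count is $1$, consistent with the theorem. Thus $\chi_T(v)\in\{1,\aleph_0,2^{\aleph_0}\}$ with exactly these values.

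Finally, I apply Corollary~\ref{cor:finite-core-count}: $\Theta(T,A)=\max_v\chi_T(v)$ over the finite core, which yields the three displayed cases. The main point requiring care is the downward-closure step, namely that no twin multiset can contain a non-arm planted type. That is handled by Lemma~\ref{lem:downward-universe} together with the observation that topological minors of paths and rays have no branching vertices.
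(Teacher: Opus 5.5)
Your proposal is correct and matches the paper's proof step for step. It uses downward closure of the arm types together with Lemma~\ref{lem:downward-universe}, reads off $I(\mathcal A_T(v))$ from $M_v(k)$ and $a_v(\infty)$, applies Theorem~\ref{thm:multiset-trichotomy} over the rigid chain $1<2<\cdots<\infty$, and concludes with Corollary~\ref{cor:finite-core-count}.

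One small slip: the remark that the ray ``contains no leaf to receive the endpoint of a path model'' is not a correct justification. Topological models need not send leaves to leaves; your own $\pi_n\plantm\pi_\infty$ sends the free end to a degree-$2$ vertex. The non-relations $\pi_\infty\not\plantm\pi_n$ and $\pi_m\not\plantm\pi_n$ for $m>n$ follow simply from injectivity of the vertex map.
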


\begin{proof}
Fix $v\in C(T,A)$. The order $Q_{\mathrm{arm}}$ is downward closed in the planted-tree order. Indeed, if $P\plantm Q$ and $Q$ is an arm, then $P$ has no branching vertex; since its root is a leaf, $P$ is also an arm. Lemma~\ref{lem:downward-universe} shows that every planted multiset mutually embeddable with $\mathcal A_T(v)$ uses only arm types.

The arm types form the rigid chain $1<2<3<\cdots<\infty$. A finite type $k$ belongs to $I(\mathcal A_T(v))$ exactly when $M_v(k)=\aleph_0$, and the ray type belongs to $I(\mathcal A_T(v))$ exactly when $a_v(\infty)=\aleph_0$. Hence $I(\mathcal A_T(v))/{\qeq}$ is empty or a singleton when $q(v)\leq1$, a finite non-antichain when $2\leq q(v)<\infty$, and infinite when $q(v)=\infty$. Theorem~\ref{thm:multiset-trichotomy} gives
\[
 \chi_T(v)=
 \begin{cases}
 1,&q(v)\leq1,\\
 \aleph_0,&2\leq q(v)<\infty,\\
 2^{\aleph_0},&q(v)=\infty.
 \end{cases}
\]
Corollary~\ref{cor:finite-core-count} now gives the stated formula.
\end{proof}

The transfinite organization below follows the Schmidt-rank induction used for ordinary twins in \cite[Lemma~5]{BBDS11} and the lower-type organization for rooted topological types in \cite[Section~4]{KP24}. The fixed-class cardinal input is Theorem~\ref{thm:multiset-trichotomy}.

\begin{theorem}\label{thm:small-recursion}
Let $(T,A)$ be a countable small anchored tree with nonempty branch hull. Then $\Theta(T,A)\in\{1,\aleph_0,2^{\aleph_0}\}$.

Suppose that $\rho(T,A)=\alpha>0$. Let $\mathcal Q_{<\alpha}$ be the set of isomorphism types of countable small planted trees of branch rank below $\alpha$, ordered by $\plantm$. For $v\in C(T,A)$, regard $\mathcal A_T(v)$ as a $\mathcal Q_{<\alpha}$-multiset and put $I_v=I(\mathcal A_T(v))$, $F_v=F(\mathcal A_T(v))$, and $P_v=I_v/{\qeq}$. For $p\in\mathcal Q_{<\alpha}$, let $\theta(p)=\Theta(P)$, where $P$ is any planted tree representing $p$; this is well defined. Then Theorem~\ref{thm:multiset-trichotomy} gives $\chi_T(v)$, and $\Theta(T,A)=\max_{v\in C(T,A)}\chi_T(v)$.

More explicitly, $\chi_T(v)=2^{\aleph_0}$ if and only if at least one of the following holds:
\begin{enumerate}[label=\textup{(\roman*)}]
\item $P_v$ is infinite;
\item $\theta(p)>1$ for some exact type $p\in I_v$;
\item $\theta(p)=2^{\aleph_0}$ for some exact type $p$ occurring in $F_v$.
\end{enumerate}
If none of these conditions holds, then $\chi_T(v)=\aleph_0$ if and only if $P_v$ is a finite non-antichain or $\theta(p)=\aleph_0$ for some exact type $p$ occurring in $F_v$. In all other cases, $\chi_T(v)=1$.
\end{theorem}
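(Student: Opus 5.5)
We argue by transfinite induction on the branch rank $\alpha=\rho(T,A)$. The rank-$0$ case is Corollary~\ref{cor:finite-hull}, which already gives $\Theta(T,A)\in\{1,\aleph_0,2^{\aleph_0}\}$.

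For $\alpha>0$, fix $v\in C(T,A)$. We aim to compute $\chi_T(v)$ with Theorem~\ref{thm:multiset-trichotomy} applied to $Q=\mathcal Q_{<\alpha}$, and then conclude with Corollary~\ref{cor:finite-core-count}.

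\textbf{Step 1: the occurrences lie in $\mathcal Q_{<\alpha}$.} By Lemma~\ref{lem:rank-drop}, every planted piece at $v$ has branch rank below $\alpha$. It is countable, and it is small because it is a subtree of $T$ (Corollary~\ref{cor:smallness-downward}). So $\mathcal A_T(v)$ is a $\mathcal Q_{<\alpha}$-multiset.

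\textbf{Step 2: $\mathcal Q_{<\alpha}$ is downward closed.} This lets Lemma~\ref{lem:downward-universe} show that every $M\meq\mathcal A_T(v)$, computed in $\mathcal Q_*$, is again a $\mathcal Q_{<\alpha}$-multiset. Hence $\chi_T(v)$ equals the count in Theorem~\ref{thm:multiset-trichotomy}. To check downward closure, let $P\plantm P'$ with $P'$ small of branch rank $<\alpha$. Corollary~\ref{cor:smallness-downward} makes $P$ small. If $B(P)$ is nonempty, Lemma~\ref{lem:hull-monotone} and rank monotonicity \cite[Lemma~2.1]{KP24} give $\rho(P)\le\rho(P')$. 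A planted tree always has nonempty branch hull because of its anchor, so this case distinction never arises.

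\textbf{Step 3: the hypotheses of Theorem~\ref{thm:multiset-trichotomy}.}
\begin{itemize}
\item $\mathcal Q_{<\alpha}$ is a wqo by Corollary~\ref{cor:planted-wqo}.
\item $|\mathcal Q_{<\alpha}|\le 2^{\aleph_0}$, since countable trees are coded by subsets of $\mathbb N^2$.
\item The class $[p]_{\qeq}$ in $\mathcal Q_{<\alpha}$ is exactly the anchored topological-equivalence class of a representative $P$. Indeed, $\plantm$ is the anchor-preserving relation, and by Step 2 every twin of $P$ stays in $\mathcal Q_{<\alpha}$. Hence $\theta(p)=\Theta(P)$, which is well defined.
\item Each $P$ has branch rank $<\alpha$, so the induction hypothesis gives $\theta(p)\in\{1,\aleph_0,2^{\aleph_0}\}$.
\end{itemize}
Theorem~\ref{thm:multiset-trichotomy} therefore gives $\chi_T(v)=\max\{g,e\}\in\{1,\aleph_0,2^{\aleph_0}\}$. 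Corollary~\ref{cor:finite-core-count} then yields $\Theta(T,A)=\max_v\chi_T(v)$, and this value is again in the three-element set, closing the induction.

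\textbf{Step 4: the explicit criterion.} We unwind $\max\{g(A),e(A)\}$ case by case.
\begin{itemize}
\item $g=2^{\aleph_0}$ holds exactly when $P_v$ is infinite (condition (i)), or when $P_v$ is finite and contains a nonrigid class. A nonrigid class means some $p\in I_v$ has $\theta(p)>1$, which is condition (ii). Condition (ii) never creates a nonrigid class when $P_v$ is infinite, so the disjunction is exact.
\item $e=2^{\aleph_0}$ holds exactly when condition (iii) holds.
\item If none of (i)–(iii) holds, then $g\in\{1,\aleph_0\}$, with $g=\aleph_0$ exactly when the finite rigid $P_v$ is not an antichain. Also $e=\aleph_0$ exactly when some $p$ occurring in $F_v$ has $\theta(p)=\aleph_0$.
\end{itemize}
The stated trichotomy follows.

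The main obstacle is the identification of $\theta$ in Step 3. We need $\qeq$ inside $\mathcal Q_{<\alpha}$ to coincide with anchored topological equivalence of planted trees, so that $\theta$ matches both the class size in the multiset theorem and the inductively known $\Theta$. This requires the downward closure of Step 2, and with it the justification of rank monotonicity for branch hulls under anchor-preserving models.
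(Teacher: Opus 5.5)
Your proposal is correct and follows essentially the same route as the paper's proof: induction on branch rank with Corollary~\ref{cor:finite-hull} as the base case, Lemma~\ref{lem:rank-drop} and the downward closure of $\mathcal Q_{<\alpha}$ (via Lemma~\ref{lem:downward-universe}) to place every mutually embeddable local multiset in the right universe, the induction hypothesis to supply the class sizes $\theta(p)$, and then Theorem~\ref{thm:multiset-trichotomy} and Corollary~\ref{cor:finite-core-count}. The differences are cosmetic. You identify each $\qeq$-class with the full planted twin class via downward closure, where the paper uses equality of branch ranks. You also write out the unwinding of $\max\{g,e\}$ into conditions (i)--(iii), which the paper leaves implicit.
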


\begin{proof}
Following the rank-induction framework of \cite[Lemma~5]{BBDS11} and the lower-type organization of \cite[Section~4]{KP24}, we prove the cardinal conclusion and the recursion simultaneously for all countable small anchored trees with nonempty branch hull, by transfinite induction on the branch rank. Corollary~\ref{cor:finite-hull} is the rank-$0$ case. Let $\alpha>0$, assume both assertions for every smaller rank, and let $(T,A)$ have branch rank $\alpha$.

By Lemma~\ref{lem:rank-drop}, every planted piece in every $\mathcal A_T(v)$ has branch rank below $\alpha$, so it belongs to $\mathcal Q_{<\alpha}$. There are at most $2^{\aleph_0}$ isomorphism types of countable rooted trees, and Corollary~\ref{cor:planted-wqo} shows that $\mathcal Q_{<\alpha}$ is a wqo.

The suborder $\mathcal Q_{<\alpha}$ is downward closed. Indeed, suppose that $P'\plantm P$ and $P\in\mathcal Q_{<\alpha}$. The tree $P'$ is countable because the model is injective on vertices. It is small by Corollary~\ref{cor:smallness-downward}, and Lemma~\ref{lem:hull-monotone} together with rank monotonicity gives $\rho(P')\leq\rho(P)<\alpha$. Thus $P'\in\mathcal Q_{<\alpha}$. Lemma~\ref{lem:downward-universe} therefore shows that Theorem~\ref{thm:multiset-trichotomy} counts the full local class $\mathcal X_T(v)$.

If two elements of $\mathcal Q_{<\alpha}$ are mutually $\plantm$-contained, rank monotonicity gives equality of their branch ranks. Hence the $\qeq$-class of $p\in\mathcal Q_{<\alpha}$ is the full countable planted topological-equivalence class of any representative $P$ of $p$. By the induction hypothesis, its cardinality is $\Theta(P)=\theta(p)\in\{1,\aleph_0,2^{\aleph_0}\}$. All hypotheses of Theorem~\ref{thm:multiset-trichotomy} are now satisfied. It gives the stated alternatives for every $\chi_T(v)$. Corollary~\ref{cor:finite-core-count} then yields $\Theta(T,A)=\max_{v\in C(T,A)}\chi_T(v)$ and completes the induction.
\end{proof}

\begin{proof}[Proof of Theorem~\ref{thm:main}]
Let $T$ be a countable small tree. Suppose first that $\Br(T)=\varnothing$, and let $S\topeq T$. Since $S\topm T$, every branching vertex of $S$ would map to a branching vertex of $T$; hence $\Br(S)=\varnothing$. Thus both $S$ and $T$ are finite paths, rays, or double rays. No infinite path is a topological minor of a finite path, and a double ray is not a topological minor of a ray. Within the finite paths, mutual topological containment forces the same order; every subdivision of a ray or of a double ray has the same respective type. Hence $S\cong T$, so $|[T]|=1$.

Suppose that $\Br(T)\neq\varnothing$. Lemma~\ref{lem:small-hull} shows that $B(T,\varnothing)$ is a nonempty countable rayless tree. Theorem~\ref{thm:small-recursion}, applied with empty anchor, gives $|[T]|=\Theta(T,\varnothing)\in\{1,\aleph_0,2^{\aleph_0}\}$.
\end{proof}

\begin{proof}[Proof of Corollary~\ref{cor:countable}]
If $T$ is small, apply Theorem~\ref{thm:main}. If $T$ is large, Bruno and Szeptycki \cite[Theorem~3.1]{BS23} give at least $2^{\aleph_0}$ pairwise nonisomorphic topological twins. Every $S\topeq T$ is countable because a model $S\topm T$ is injective on vertices. There are at most $2^{\aleph_0}$ isomorphism types of countable trees, so $|[T]|=2^{\aleph_0}$.
\end{proof}

\begin{corollary}\label{cor:locally-countable}
Every locally countable tree $T$ satisfies $|[T]|\in\{1,\aleph_0,2^{\aleph_0}\}$.
\end{corollary}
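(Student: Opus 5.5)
The plan is to reduce to Corollary~\ref{cor:countable}, by showing that a locally countable tree is either countable or, when uncountable, still has a class $[T]$ of the right size. Every connected locally countable graph is countable: fix a vertex $r$. The sphere of radius $n$ about $r$ is countable by induction on $n$, because each vertex has countably many neighbours. The vertex set is the countable union of these spheres, so it is countable. Hence every locally countable tree is countable. Corollary~\ref{cor:countable} then applies directly and gives $|[T]|\in\{1,\aleph_0,2^{\aleph_0}\}$.

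The only point I would check is that $[T]$ is computed the same way in both statements. Corollary~\ref{cor:countable} already counts all trees $S$ with $S\topeq T$, not only the countable ones. The paper notes that any such $S$ is countable, since a model $S\topm T$ is injective on vertices. So no further adjustment of the class is needed.

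The obstacle is only notational: the statement is phrased for locally countable trees to match the abstract. Connectedness of trees makes the locally countable hypothesis equivalent to countability. I would state this equivalence explicitly as the key step, prove it by the breadth-first counting above, and then cite Corollary~\ref{cor:countable}.
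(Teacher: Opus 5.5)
Your proposal is correct and matches the paper's proof. You count the distance spheres about a fixed vertex to show that a locally countable tree is countable, and then apply Corollary~\ref{cor:countable} directly.
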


\begin{proof}
Fix $r\in V(T)$ and let $L_n$ be the set of vertices at distance $n$ from $r$. Each $L_n$ is countable by induction, and $V(T)=\bigcup_{n<\omega}L_n$ is countable. Corollary~\ref{cor:countable} applies.
\end{proof}

\begin{remark}
Theorem~\ref{thm:small-recursion} gives a well-founded recursive description. At each core vertex, $I_v$ records the lower-rank planted types that lie below infinitely many attached pieces, while the finite multiset $F_v$ records the remaining pieces. The recursion ends at a finite branch hull, where Corollary~\ref{cor:finite-hull} gives the path-and-ray criterion.
\end{remark}

\section*{Declaration of competing interest}

The authors declare that they have no known competing financial interests or personal relationships that could have appeared to influence the work reported in this paper.

\section*{Data availability}

No data were used for the research described in this article.

\section*{Declaration on the use of generative AI}

During the preparation of this work, the authors used OpenAI's ChatGPT to discuss possible proof strategies and to improve the presentation. The authors subsequently reviewed and verified every argument and take full responsibility for the mathematical content of the manuscript.

\end{document}